\theoremstyle{plain}
\newtheorem{Thm}{Theorem}[section]
\newtheorem{Lem}[Thm]{Lemma}
\newtheorem{Prop}[Thm]{Proposition}
\newtheorem*{Thm*}{Theorem}
\theoremstyle{definition}
\newtheorem{Def}[Thm]{Definition}
\newcommand{\Ceul}{\EuScript{C}}
\newcommand{\Oeul}{\EuScript{O}}
\theoremstyle{remark}
\newtheorem{Rem}[Thm]{Remark}
\newtheorem*{theorem*}{Theorem}
\newcommand{\C}{{\mathbb C}}
\begin{document}

\title[On the number of factors]%
{On the number of factors in the unipotent factorization of holomorphic mappings into $\text{SL}_2(\mathbb{C})$}
\author{Bj\"orn Ivarsson and Frank Kutzschebauch}
\address{Department of Natural Sciences, Engineering and Mathematics, Mid Sweden University, SE-851 70 Sundsvall, Sweden}
\email{Bjorn.Ivarsson@miun.se}

\address{Institute of Mathematics\\
University of Bern\\
Sidlerstrasse 5, CH--3012 Bern, Switzerland}
\email{frank.kutzschebauch@math.unibe.ch}
\date{December 5, 2010}

\setcounter{tocdepth}{2}
\begin{abstract}
We estimate the number of unipotent elements needed to factor a null-homotopic holomorphic map from a finite dimensional reduced Stein spaces $X$ into $\text{SL}_2(\mathbb{C})$ .
\end{abstract}
\thanks{Kutzschebauch supported by Schweizerische Nationalfonds grant 200021- 116165/1.}
\maketitle
\bibliographystyle{alpha}
\tableofcontents

\section{Introduction}
It's an old well known problem whether any matrix in ${SL}_n (R)$ for a ring $R$ (associative, commutative, with unit) is a product  of elementary (or equivalent unipotent) matrices with entries in the ring $R$. Especially interesting are the cases of  polynomial rings $\C [\C^m]$, of  rings  $\Ceul (X)$ of  
continuous functions on a topological space $X$ or of holomorphic functions $\Oeul  (X)$ on a Stein space. In the algebraic case for $n=2$ such a factorization does not always exist, the first counterexample was found by Cohn \cite{CohnSGL2R}. By Suslin's deep result for $n\ge3$ there is always a polynomial factorization \cite{SuslinSSLGRP}. There are no uniform bounds on the number of factors in the algebraic case \cite{vanderKallenSL3BWL}. 
The topological case  was solved by Vaserstein, there are uniform bounds on the number of matrices needed, but no  estimate for these numbers are known.
More precisely in \cite{VasersteinRMDPDFAO} Vaserstein proved the following result.

\begin{Thm*}
Let $X$ be a finite dimensional normal topological space and $f\colon X\to \mbox{SL}_n(\mathbb{C})$ be a null-homotopic continuous mapping. There exist a number $K$, depending only on the dimension of $X$ and $n$, and continuous mappings $g_1,\dots, g_{K}\colon X\to \mathbb{C}^{n(n-1)/2}$ such that $$f(x)=M_{1}(g_1(x))M_{2}(g_2(x))\dots M_{K}(g_{K}(x)).$$
\end{Thm*}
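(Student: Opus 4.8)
The plan is to prove the quantitative converse to an elementary necessary condition. Note first that the hypothesis cannot be dropped: each factor $M_i(g_i)$ is null-homotopic, since contracting $g_i$ linearly to $0$ deforms $M_i(g_i)$ to the identity matrix, so any product of such factors is null-homotopic. The theorem asserts that null-homotopy is the only obstruction and that the number of factors is bounded by a function $K=K(d,n)$ of $d:=\dim X$ and $n$ alone. I would fix a homotopy $H\colon X\times[0,1]\to\operatorname{SL}_n(\mathbb{C})$ with $H_0=f$ and $H_1\equiv I$, and reduce the factorization to a continuous, dimension-controlled Gaussian elimination: a sequence of elementary row and column operations (left and right multiplication by unipotent triangular factors) transforming $f$ into the constant matrix $I$, the number of operations being controlled by the covering dimension $d$.

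The elimination would proceed by induction on the matrix size $n$. The essential step is to clear the first column of $f$ to the standard basis vector $e_1$ using boundedly many lower-triangular unipotent factors; symmetrically one clears the first row, arriving at a block matrix $\operatorname{diag}(1,f')$ with $f'\colon X\to\operatorname{SL}_{n-1}(\mathbb{C})$, to which the procedure is applied recursively (the residual block again admits a contraction obtained from $H$). The first column of $f$ is a nowhere-vanishing map $c\colon X\to\mathbb{C}^n\setminus\{0\}$, and restricting $H$ to the first column exhibits $c$ as null-homotopic into $\mathbb{C}^n\setminus\{0\}\simeq S^{2n-1}$. The key lemma is therefore: a null-homotopic unimodular column $c\colon X\to\mathbb{C}^n\setminus\{0\}$ can be carried to $e_1$ by a number of elementary operations bounded in terms of $d$ and $n$. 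Once $c$ has been replaced by a vector with nowhere-vanishing top entry $u$, the remaining entries are removed by the operations $c_i\mapsto c_i-(c_i/u)\,u$, and the leftover diagonal unit is disposed of through the Whitehead identity, which writes $\operatorname{diag}(u,u^{-1})$ as a product of a bounded number of unipotent factors. Summing the bounded contribution of each of the $n$ stages yields the uniform bound $K(d,n)$.

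The main obstacle is the key lemma's uniform bound, and in particular the step producing a nowhere-vanishing pivot from $c$. A single combination $c_1+\sum_{i\ge2}a_i c_i$ must be made zero-free by a continuous choice of coefficients $a_i\colon X\to\mathbb{C}$, and the locus to be avoided is, fiberwise, the complement of a complex hyperplane, homotopy equivalent to $S^1$; the resulting obstruction in $H^2(X;\mathbb{Z})$ does not vanish for dimensional reasons alone once $d\ge 2$, and in the small-$n$ range (paradigmatically $n=2$) general position gives no relief. It is exactly here that the null-homotopy $H$ must be used essentially: rather than perturbing within a single time slice, I would exploit the contraction supplied by $H$ to move $c$ across the forbidden locus, building the deformation over the successive skeleta (dimensional layers) of $X$ and absorbing it into elementary factors whose number is controlled by $d+1$, the number of layers. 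Keeping this construction quantitative — so that the count depends only on $d$ and $n$ and not on $f$ — is the delicate heart of the argument; the remaining steps are bounded, explicit elementary-matrix bookkeeping.
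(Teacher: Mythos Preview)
The paper does not prove this theorem. It is quoted from Vaserstein \cite{VasersteinRMDPDFAO} as a known result and then used as input for the holomorphic problem; there is no proof in the paper to compare your proposal against.

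As for the proposal itself: the overall architecture (induction on $n$, reduce to transforming the first unimodular column to $e_1$, then recurse) is indeed the shape of Vaserstein's argument, and you have correctly located the only genuinely nontrivial step, namely producing a nowhere-vanishing pivot in boundedly many elementary moves. But your treatment of that step is a description of what must be done rather than an argument. The sentence ``exploit the contraction supplied by $H$ to move $c$ across the forbidden locus, building the deformation over the successive skeleta \dots\ and absorbing it into elementary factors whose number is controlled by $d+1$'' does not explain how a homotopy of the column translates into a bounded word in elementary matrices, nor why the skeletal induction terminates with a uniform bound independent of $f$. Vaserstein's actual mechanism is different in flavor from the obstruction-theoretic picture you sketch: he uses covering dimension directly, via a stable-range type lemma asserting that on a $d$-dimensional normal space one can add suitable continuous multiples of the later entries to earlier ones so as to shorten the column step by step, the number of steps depending only on $d$ and $n$. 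Your $H^2$ discussion is tangential to this; the argument is not a single cohomological obstruction computation but an iterated general-position/dimension argument. If you want a self-contained proof you should either reproduce that lemma or consult \cite{VasersteinRMDPDFAO}; as written, the proposal has the right skeleton but the key lemma is asserted, not proved.
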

Here $M_j$ is defined as follows. For $j$ odd put $$M_j(g_j(x))= \left(\begin{matrix} 1 &   & 0 \\  & \ddots &  \\ g_j(x) & & 1 \end{matrix}\right)$$ and for $j$ even  $$M_j(g_j(x))= \left(\begin{matrix} 1 &   & g_j(x) \\  & \ddots &  \\ 0 & & 1 \end{matrix}\right).$$

The holomorphic case was solved by the authors \cite{IvarssonKutzschebauchHFMSLG} (announced in \cite {IvarssonKutzschebauchSGVP}) where the following  theorem is proven.
\begin{Thm*}
Let $X$ be a finite dimensional reduced Stein space and $f\colon X\to \mbox{SL}_n(\mathbb{C})$ be a null-homotopic holomorphic mapping. There exist a number $K$, depending only on the dimension of $X$ and $n$, and holomorphic mappings $g_1,\dots, g_{K}\colon X\to \mathbb{C}^{n(n-1)/2}$ such that $$f(x)=M_{1}(g_1(x))M_{2}(g_2(x))\dots M_{K}(g_{K}(x)).$$
\end{Thm*}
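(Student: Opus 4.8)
The plan is to recast the existence of a \emph{holomorphic} factorization as the existence of a holomorphic section of a holomorphic submersion over $X$, and then to produce that section from the already-available \emph{continuous} factorization by means of Gromov's Oka principle. For a fixed length $K$ consider the polynomial product map
$$\Phi_K\colon\bigl(\mathbb{C}^{n(n-1)/2}\bigr)^{K}\longrightarrow\mathrm{SL}_n(\mathbb{C}),\qquad (g_1,\dots,g_K)\longmapsto M_1(g_1)M_2(g_2)\cdots M_K(g_K),$$
and form the fibre product $Z:=\{(x,g)\in X\times(\mathbb{C}^{n(n-1)/2})^{K}:\Phi_K(g)=f(x)\}$ with its projection $\pi\colon Z\to X$. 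A holomorphic factorization of $f$ of length $K$ is exactly a holomorphic section of $\pi$, and a continuous factorization is exactly a continuous section. Since $f$ is null-homotopic, Vaserstein's theorem supplies such a continuous section $\sigma$ for some $K=K(\dim X,n)$, and applied to points it also forces $\Phi_K$ to be surjective; this is where the counting, and hence the bound on $K$, enters.

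First I would fix $K$ large enough (depending only on $\dim X$ and $n$) that $\sigma$ exists and that $\Phi_K$ has the fibrewise flexibility needed below. The goal is then to arrange the hypotheses under which the Oka principle turns $\sigma$ into a holomorphic section: because $X$ is Stein, it suffices that $\Phi_K$ be a \emph{stratified elliptic submersion}, i.e.\ that there be a stratification $\mathrm{SL}_n(\mathbb{C})=\bigsqcup_\alpha S_\alpha$ into locally closed smooth subvarieties such that over each $S_\alpha$ the restriction of $\Phi_K$ is a holomorphic submersion carrying a fibre-dominating spray. Pulling this structure back by the holomorphic map $f$ yields a stratified elliptic submersion $\pi\colon Z\to X$ over the Stein base, to which Gromov's theorem applies. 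A stratification is genuinely needed: the differential $d\Phi_K$ drops rank on a proper subvariety (already for $n=2$ the fibres degenerate over the smaller Bruhat cells), so $\Phi_K$ is not a global submersion, and one must refine the Bruhat decomposition until $\Phi_K$ is submersive over each piece.

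The heart of the argument, and the step I expect to be the main obstacle, is producing enough fibrewise flexibility. The natural source is the redundancy built into a long factorization: the relations among elementary matrices yield complete holomorphic vector fields on $(\mathbb{C}^{n(n-1)/2})^{K}$ that alter a block of consecutive factors while leaving their product identically unchanged, hence are tangent to every fibre of $\Phi_K$, so that the composition of their flows is a spray. The task is to exhibit finitely many such fields whose values span the vertical tangent space at each point of a given stratum, which is the domination condition. I would verify this first over the top-dimensional stratum, where the fibres are smooth affine varieties of the expected dimension, and then treat the lower strata once it is checked that the fibres stay smooth there. The spanning is a computation in the Lie algebra of $\mathrm{SL}_n(\mathbb{C})$, and making it work uniformly across the whole stratification is the technical core.

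Granting that $\Phi_K$ is a stratified elliptic submersion, the proof concludes at once: the Oka principle deforms the continuous section $\sigma$ coming from Vaserstein's theorem into a holomorphic section $G=(g_1,\dots,g_K)$ of $\pi$, which is precisely a holomorphic factorization $f=M_1(g_1)\cdots M_K(g_K)$. The bound on $K$ is inherited from Vaserstein's theorem and depends only on $\dim X$ and $n$, since converting a continuous section into a holomorphic one does not change the length of the factorization.
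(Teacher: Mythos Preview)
Your overall architecture---pull back $\Phi_K$ along $f$, use Vaserstein for a continuous section, invoke the Oka principle for stratified elliptic submersions---is exactly the strategy of the paper. However, there is a genuine gap in your handling of the non-submersive locus, and it leads to an incorrect final claim.

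You propose to stratify the \emph{target} $\mathrm{SL}_n(\mathbb{C})$ so that $\Phi_K$ becomes submersive over each stratum. This cannot work: the singular set of $\Phi_K$ lies in the \emph{source}, and its image need not be a union of strata with purely singular fibres. Already for $n=2$ the paper computes that $\Phi_N$ fails to be submersive exactly on $S_N=\{z_2=\cdots=z_{N-1}=0\}$, whose image is the hypersurface $\{a=1\}$ (for $N$ even); but the fibres over $\{a=1\}$ contain both singular and regular points, so no refinement of a target stratification makes the restricted map a submersion. The paper's remedy is to \emph{excise} $S_N$ from the domain and work with $\Phi_N\colon\mathbb{C}^N\setminus S_N\to\mathrm{SL}_2(\mathbb{C})$, which is a genuine submersion, and only then stratify the target to build the sprays. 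The price is that Vaserstein's continuous section may meet $S_N$; the paper fixes this by inserting two extra factors (the trick $U(1)L(0)U(-1)=I$) to push the section off $S_{N}$ into $\mathbb{C}^{N+2}\setminus S_{N+2}$.

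Consequently your last sentence---that converting continuous to holomorphic ``does not change the length of the factorization''---is not justified and is in fact false: the paper's analysis of the Cohn matrix shows a case where $4$ continuous factors suffice but $5$ holomorphic factors are required, and that \emph{every} continuous $4$-factorization necessarily meets the singular set $S_4$. For general $n$ the situation is worse: the authors explicitly say they could \emph{not} build a stratified spray for the full $\Phi_K$-fibration and had to proceed by induction on $n$, projecting to the last row at each step; this is precisely the ``technical core'' you flag but do not carry out, and it costs additional factors at every inductive stage. Your outline is correct in spirit, but the two concrete obstacles---removing the singular set and then steering the continuous section away from it---are where the actual work and the growth of $K$ occur.
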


The proof of this theorem is done by reduction via the Oka-Grauert-Gromov-h-principle to the topological result of Vaserstein. This of course relates the number of factors needed for the holomorphic factorization to the numbers needed for the topological factorization. The relation given by the proof in \cite{IvarssonKutzschebauchHFMSLG} is probably not very sharp. To describe it let's introduce the following numbers:  Let $K(n,m,\Ceul)$ be the minimal number that all null-homotopic continuous mappings from normal topological spaces of dimension
$m$ into $\mbox{SL}_n(\mathbb{C})$ factorize as a product of $K(n,m,\Ceul)$ continuous unipotent matrices
(starting with a lower triangular one) and $K(n,m,\Oeul)$ be the minimal number that all null-homotopic holomorphic mappings from Stein spaces of dimension
$m$ into $\mbox{SL}_n(\mathbb{C})$ factorize as a product of $K(n,m,\Oeul)$ holomorphic unipotent matrices
(starting with a lower triangular one):   Then the relation given by our proof is:

$$K(n,m,\Oeul)\le 1 + \sum_{i= 2}^{n} (  K(i,2 m,\Ceul) +3 )$$ 

The proof goes by induction over the size of matrices since we could not prove that a certain fibration satisfies the Oka-Grauert-Gromov-h-principle. We had to project to the last row in order to construct the stratified spray. Adding $3$ at each step in the induction is used to avoid the singularity set of the fibration by the topological section provided from the Vaserstein theorem. 
Now of course a Stein space $X$ of dimension $m$ is a topological space of dimension $2m$, but Stein spaces are very special topological spaces (they have homology at most up to half of the real dimension) and holomorphic maps
are special among continuous maps. So if we introduce a number $K(n,m,\Ceul, \Oeul)$ to be the
minimal number $l$ such that all null-homotopic holomorphic mappings from Stein spaces of dimension
$m$ into $\mbox{SL}_n(\mathbb{C})$ factorize as a product of $l$ continuous unipotent matrices (starting with a lower triangular one), then the above mentioned proof gives
\begin{equation} K(n,m,\Oeul)\le 1 + \sum_{i= 2}^n (  K(i, m,\Ceul,\Oeul) +3 ) \label{1}
\end{equation}

which might be a better estimate since obviously $K(i,m,\Ceul,\Oeul)  \le K(i,2 m,\Ceul)$ and this inequality might be strict.

In general it is a very interesting question to find out bounds for the number of factors. Moreover
such bounds lead to concrete estimates for Kazhdan constants. Namely, in a small note \cite{IvarssonKutzschebauchKPTSLHF} the authors show that the groups $SL_n (\Oeul (X) )$ for a contractible Stein space $X$ admit Kazhdan's property (T) for $n\ge 3$. 

The present  paper is a starting point  of a  systematic study of the number of factors needed. We
have only results in the case $n=2$, i.e., matrices of size $2$ by $2$.

The first result of our paper  is an improvement of the estimate in equation (\ref{1}) by $2$ factors. We gain one factor compared to our earlier work by some easy trick, but the other factor is hard work. We find a stratified spray for a more complicated situation. Namely we can avoid the projection to the last row.
More precisely we prove:

\begin{Thm*} (see Theorem \ref{comparison})
Let $X$ be a finite dimensional Stein space and $f\colon X\to \mbox{SL}_2(\mathbb{C})$ be a holomorphic mapping that is null-homotopic.  Assume that there exists continuous mappings $g_1,\dots, g_{K}\colon X\to \mathbb{C}$ such that $$f(x)=M_{1}(g_1(x))M_{2}(g_2(x))\dots M_{K}(g_{K}(x)).$$
Then there exists holomorphic mappings $h_1,\dots, h_{K+2}\colon X\to \mathbb{C}$ such that $$f(x)=M_{1}(h_1(x))M_{2}(h_2(x))\dots M_{K+2}(h_{K+2}(x)).$$
\end{Thm*}

This result  (in our terms stated as  $K(2,m,\Oeul)\le 2 +  K(2, m,\Ceul,\Oeul) $) is almost sharp. In section \ref{ex} we work out in detail the Cohn example and find that
one needs to add one factor for the holomorphic factorization compared to the continuous 
factorization.

Our second result in the paper are some first exact estimates (see Theorems \ref{dim1} and \ref{dim2}):

$$K(2,1, \Oeul)=4 , \quad K(2,2,\Oeul) =5$$
 
 Clearly at least  $4$ factors are always needed since multiplication of $3$ elementary matrices is not surjective to $\mbox{SL}_2(\mathbb{C})$.
 
 We thank Shulim Kaliman and Anand Dessai for helpful conversations on topological matters.



\section{Proof of factorization for $\text{SL}_2(\mathbb{C})$}

\subsection{Overview of the proof}

We will give a new proof of the following theorem. 
\begin{Thm}
Let $X$ be a finite dimensional Stein space and $f\colon X\to \mbox{SL}_2(\mathbb{C})$ be a holomorphic mapping that is null-homotopic. Then there exist a number $K$, depending only on the dimension of $X$, and holomorphic mappings $g_1,\dots, g_{K}\colon X\to \mathbb{C}$ such that $$f(x)=M_{1}(g_1(x))M_{2}(g_2(x))\dots M_{K}(g_{K}(x)).$$
\end{Thm}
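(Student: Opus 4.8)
The plan is to obtain the factorization by combining Vaserstein's topological theorem with the holomorphic comparison result of Theorem~\ref{comparison}. A finite dimensional Stein space $X$ is in particular a finite dimensional normal topological space, and a null-homotopic holomorphic map $f\colon X\to\mbox{SL}_2(\C)$ is null-homotopic as a continuous map. Hence Vaserstein's theorem \cite{VasersteinRMDPDFAO}, applied with $n=2$ so that the target $\mathbb{C}^{n(n-1)/2}$ is simply $\C$, produces a number $K_0$ depending only on the dimension of $X$ together with continuous maps $g_1,\dots,g_{K_0}\colon X\to\C$ such that $f=M_1(g_1)\cdots M_{K_0}(g_{K_0})$. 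It then remains only to upgrade this continuous factorization to a holomorphic one, and by Theorem~\ref{comparison} this costs just two extra factors; setting $K=K_0+2$ finishes the proof.

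The substance of the new proof is therefore contained in the comparison theorem, which I would establish through the Oka--Grauert--Gromov h-principle. Introduce the holomorphic product map
$$\Phi_K\colon \C^K\to\mbox{SL}_2(\C),\qquad \Phi_K(g_1,\dots,g_K)=M_1(g_1)M_2(g_2)\cdots M_K(g_K),$$
in which the $M_j$ alternate between lower and upper unipotent matrices. A holomorphic factorization of $f$ of length $K$ is exactly a holomorphic lift of $f$ through $\Phi_K$, equivalently a holomorphic section of the pullback of $\Phi_K$ by $f$; the factorization supplied by Vaserstein is a continuous such section. Since $f$ is null-homotopic the topological obstruction is already resolved, so by Gromov's theorem the continuous section deforms to a holomorphic one as soon as the relevant stratified fibration over $X$ is a stratified elliptic submersion, that is, as soon as one exhibits dominating sprays on the fibers of $\Phi_K$, compatibly over a stratification of $\mbox{SL}_2(\C)$.

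The main obstacle, and the place where the hard work lies, is that $\Phi_K$ is not a submersion: its differential drops rank along a proper analytic subset, the fibers jump in dimension there, and the plain bundle version of the Oka principle does not apply. The strategy is to stratify the target so that over each stratum $\Phi_K$ becomes a stratified submersion, to construct the dominating sprays explicitly from fiber-preserving flows coming from local relations among consecutive unipotent factors (for instance the one-parameter families that leave the product of four consecutive factors unchanged), and to arrange these sprays to be compatible along the stratification. In the earlier argument \cite{IvarssonKutzschebauchHFMSLG} the singular locus was avoided by first projecting the product onto its last row, which made the fibers affine spaces at the price of additional factors; the improvement here is to build the stratified spray directly for the full product map --- the ``more complicated situation'' mentioned above --- so that no such projection is needed and only two factors, rather than three, are spent controlling the stratification. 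Granting the existence of these compatible dominating sprays, Gromov's h-principle converts the Vaserstein section into a holomorphic one, which is precisely the desired holomorphic factorization.
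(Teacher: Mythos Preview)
Your overall strategy matches the paper's: feed the null-homotopic $f$ into Vaserstein's theorem to obtain a continuous factorization, interpret this as a continuous section of the pullback of the product map $\Phi_N$, and then invoke the Oka--Grauert--Gromov h-principle for stratified elliptic submersions (Forstneri\v{c}'s Theorem~\ref{t:forstnericprezelj}) to deform it to a holomorphic section; the two extra factors are spent exactly as in the paper, namely to push the continuous section off the non-submersive set $S_N$. The one place where your outline diverges from the paper is the construction of the dominating sprays: you propose using ``one-parameter families that leave the product of four consecutive factors unchanged,'' whereas the paper does something more concrete and different --- it shows each fiber of $\Phi_N\colon\C^N\setminus S_N\to\mbox{SL}_2(\C)$ is a graph over a smooth affine hypersurface $\{Q_1=a\}$ (resp.\ $\{Q_2=b\}$ on the lower stratum), and then uses the complete vector fields $V_{kl}=P_l\,\partial/\partial z_k-P_k\,\partial/\partial z_l$, which are complete precisely because the defining polynomial is affine in each variable separately. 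Your suggested mechanism is plausible but not obviously complete or spanning, and you yourself flag it as ``granting the existence''; the paper's graph-over-hypersurface picture is what actually makes the spray construction go through.
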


Sometimes below we will write $$U(g(x))= \left(\begin{matrix} 1 & g(x) \\ 0 & 1 \end{matrix}\right)$$ and $$L(g(x))= \left(\begin{matrix} 1 & 0 \\ g(x) & 1 \end{matrix}\right).$$

The strategy for proving this result is as follows. Below we will define a holomorphic mapping $\Phi_N\colon \mathbb{C}^N\to \mbox{SL}_2(\mathbb{C})$ which is surjective when $N\ge 4$. However it is submersive only outside a certain set $S_N$ so therefore we study $\Phi_N\colon \mathbb{C}^N\setminus S_N\to \mbox{SL}_2(\mathbb{C})$. This mapping will still be surjective so we have a surjective holomorphic submersion. If we can find a holomorphic $g\colon X \to \mathbb{C}^N\setminus S_N$ such that the diagram 

$$\xymatrix{ & \mathbb{C}^{N}\setminus S_N \ar[d]^{\Phi_{N}} \\ X \ar[r]_{f} \ar[ur]^{g} & \mbox{SL}_2(\mathbb{C})}$$
is commutative we will have found the desired factorization. To find this mapping we will pull-back the bundle $\xi=(\mathbb{C}^N\setminus S_N, \Phi_N,\mbox{SL}_2(\mathbb{C}))$ with $f$ to get the bundle $f^*\xi=(f^*(\mathbb{C}^{N}\setminus S_N),f^*\Phi_{N},X)$ via the commutative diagram 

$$\xymatrix{f^*(\mathbb{C}^{N}\setminus S_N) \ar[r]^{f_\xi} \ar[d]_{f^*\Phi_{N}} & \mathbb{C}^{N}\setminus S_N \ar[d]^{\Phi_{N}} \\ X \ar[r]_{f} & \mbox{SL}_2(\mathbb{C})}$$
and a section of this bundle will correspond to a factorization into a product of unipotent matrices. The result by Vaserstein gives us a continuous mapping that after some manipulation can be made to avoid $S_N$. After this change of the mapping it pulls back to a continuous section of the pull-back bundle. This manipulation is the geometric reason for the increase in the number of factors needed in the holomorphic case. We will construct complete holomorphic vector fields on the fibers of $\Phi_N$ and consequently on the fibers of $f^*\Phi_N$. We can then use results of Gromov \cite{GromovOPHSEB} and Forstneri\v c \cite{ForstnericOPSSFB} to conclude that the continuous section can be homotopically deformed to a holomorphic section. We then have proven that the desired holomorphic factorization exists.

  
\subsection{The mapping $\Phi_N$ and it's fibers} 
Define the mapping $\Phi_N\colon \mathbb{C}^N \to \mbox{SL}_2(\mathbb{C})$ as 
\begin{equation*}\Phi_N(z_1,\dots,z_N)=M_{1}(z_1) M_{2}(z_2)\cdots M_{N}(z_N).\end{equation*} 
Let us investigate where the mapping $\Phi_N $ is submersive. 
\begin{Lem}\label{L:submersive}
The mapping $\Phi_N$ is submersive exactly at points $$z\in \mathbb{C}^N\smallsetminus \{(z_1,0,\dots,0,z_N)\}$$ when $N\ge 4$. 
\end{Lem}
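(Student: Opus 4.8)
The plan is to linearize $\Phi_N$ at an arbitrary point, translate the image of the differential into $\mathfrak{sl}_2(\mathbb{C})$, and recognize the resulting vectors as points on the nilpotent conic; submersivity then becomes a question of counting distinct such points, which is elementary.

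First I would differentiate along the coordinate directions. With $P_{j-1}=M_1(z_1)\cdots M_{j-1}(z_{j-1})$ one has $\partial_{z_j}\Phi_N = P_{j-1}\,M_j'(z_j)\,M_{j+1}(z_{j+1})\cdots M_N(z_N)$, where $M_j'$ is the constant matrix $\left(\begin{smallmatrix}0&0\\1&0\end{smallmatrix}\right)$ for odd $j$ and $\left(\begin{smallmatrix}0&1\\0&0\end{smallmatrix}\right)$ for even $j$. Right-translating by $\Phi_N(z)^{-1}$ cancels everything to the right of $M_j'$, leaving $(\partial_{z_j}\Phi_N)\,\Phi_N(z)^{-1}=\mathrm{Ad}(P_{j-1})\bigl(M_j'(z_j)M_j(z_j)^{-1}\bigr)$; a one-line check shows $M_j'(z_j)M_j(z_j)^{-1}$ does not depend on $z_j$ and equals $E_-=\left(\begin{smallmatrix}0&0\\1&0\end{smallmatrix}\right)$ for $j$ odd and $E_+=\left(\begin{smallmatrix}0&1\\0&0\end{smallmatrix}\right)$ for $j$ even. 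Thus $\Phi_N$ is submersive at $z$ exactly when the $N$ vectors $v_j:=\mathrm{Ad}(P_{j-1})E_{\pm j}$ span the three-dimensional space $\mathfrak{sl}_2(\mathbb{C})$.

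The crucial observation is that each $v_j$ is a nonzero nilpotent matrix, hence defines a point $[v_j]$ on the smooth (nondegenerate) conic $C=\{\det=0\}\subset\mathbb{P}(\mathfrak{sl}_2(\mathbb{C}))\cong\mathbb{P}^2$. Since a line meets a smooth conic in at most two points, three distinct points of $C$ are never collinear, so the $v_j$ span $\mathfrak{sl}_2(\mathbb{C})$ if and only if at least three of the $[v_j]$ are distinct. Sending a rank-one nilpotent to its image line identifies $C$ with $\mathbb{P}^1$, and because $\mathrm{Im}(\mathrm{Ad}(g)E_\pm)=g\cdot\mathrm{Im}(E_\pm)$, the point $[v_j]$ is carried to $\ell_j:=P_{j-1}\cdot p_j\in\mathbb{P}^1$, where $\mathrm{SL}_2(\mathbb{C})$ acts by M\"obius transformations and $p_j=[0:1]$ for $j$ odd, $p_j=[1:0]$ for $j$ even. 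So it remains to show that $\ell_1,\dots,\ell_N$ assume at least three distinct values precisely when $(z_2,\dots,z_{N-1})\neq 0$.

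Finally I would settle this combinatorially, using that $L(z)$ fixes $[0:1]$ and $U(z)$ fixes $[1:0]$. If $z_2=\dots=z_{N-1}=0$, then $P_{j-1}\in\{I,L(z_1)\}$, and since $L(z_1)$ fixes $[0:1]$ every $\ell_j$ lies in the two-element set $\{[0:1],\,L(z_1)\cdot[1:0]\}$; so $\Phi_N$ is not submersive there. Conversely, let $k$ be the smallest index in $\{2,\dots,N-1\}$ with $z_k\neq 0$; then $M_2=\dots=M_{k-1}=\mathrm{id}$, whence $\ell_{k+1}=L(z_1)\cdot\bigl(M_k\cdot p_{k+1}\bigr)$, where $M_k\cdot p_{k+1}$ is the finite nonzero point $z_k$ (if $k$ even) or $1/z_k$ (if $k$ odd). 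Because $L(z_1)$ is injective, fixes $[0:1]$, and sends $[1:0]$ to $[1:z_1]$, the point $\ell_{k+1}$ differs from both $\ell_1=[0:1]$ and $\ell_2=L(z_1)\cdot[1:0]$, producing a third value and hence submersivity; note $k\le N-1$ guarantees the index $k+1$ is legitimate. The real content is the conic argument in the third paragraph: once submersivity is recast as linear independence of a family of nilpotents, the computation collapses to inspecting two or three explicit M\"obius orbit points. I expect the main obstacle to be purely bookkeeping---keeping the parities, the base points $p_j$, and the two exceptional coordinates $z_1,z_N$ straight---rather than anything deep.
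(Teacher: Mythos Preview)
Your argument is correct and takes a genuinely different route from the paper. The paper proceeds by a direct computation for three consecutive factors: it writes out $\lambda_1\partial_{z_1}\Phi_3+\lambda_2\partial_{z_2}\Phi_3+\lambda_3\partial_{z_3}\Phi_3=0$, simplifies by multiplying on both sides by suitable inverses until the equation lives in $\mathfrak{sl}_2(\mathbb{C})$, reads off a $3\times 3$ determinant equal to $z_2$, and then for general $N$ picks any nonzero interior coordinate $z_i$ and applies the three-factor result to the block $(z_{i-1},z_i,z_{i+1})$. Your approach instead right-translates all partials at once, recognizes each resulting tangent vector as a point on the nilpotent conic in $\mathbb{P}(\mathfrak{sl}_2(\mathbb{C}))$, invokes the fact that three distinct points on a smooth conic are never collinear, and then reduces the whole question to counting distinct values among the M\"obius images $\ell_j=P_{j-1}\cdot p_j\in\mathbb{P}^1$. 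What this buys you is a cleaner treatment of both directions simultaneously (the paper is explicit only about the ``if'' direction and leaves the non-submersivity on the bad set implicit), uniformity in $N$ without a separate $\Phi_3/\Psi_3$ base case, and a conceptual explanation of why exactly the interior coordinates $z_2,\dots,z_{N-1}$ matter. The paper's method, on the other hand, is entirely elementary linear algebra and needs no input from projective geometry, which some readers may prefer.
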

\begin{proof} 
We begin by studying when the differential of $\Phi_3$ spans a 3-dimensional space. Therefore we study the equation \[ \lambda_1\frac{\partial \Phi_3}{\partial z_1}+\lambda_2\frac{\partial \Phi_3}{\partial z_2}+\lambda_3\frac{\partial \Phi_3}{\partial z_3}=0 \] which we write as, using \[ e_{12}=\left(\begin{matrix} 0 & 1 \\ 0 & 0 \end{matrix}\right) \mbox{ and } e_{21}=\left(\begin{matrix} 0 & 0 \\ 1 & 0 \end{matrix}\right), \] \[\lambda_1e_{21}U(z_2)L(z_3)+\lambda_2L(z_1)e_{12}L(z_3)+\lambda_3L(z_1)U(z_2)e_{21}=0. \] We now multiply this equation from the right by \[ \left(L(z_1)U(z_2)L(z_3)\right)^{-1}=L(-z_3)U(-z_2)L(-z_1).\] This doesn't change the dimension of the span of the differential and since $e_{21}L(-z_3)=e_{21}$ we get the equation \[ \begin{aligned}\lambda_1 e_{21}L(-z_1)&+\lambda_2L(z_1)e_{12}U(-z_2)L(-z_1)+\lambda_3L(z_1)U(z_2)e_{21}L(-z_3)U(-z_2)L(-z_1)=\\ &= \lambda_1 e_{21}+\lambda_2L(z_1)e_{12}L(-z_1)+\lambda_3L(z_1)U(z_2)e_{21}U(-z_2)L(-z_1)=0\end{aligned}\] Notice that we have an equation that is independent of $z_3$. We can therefore put $z_3=0$ in the original equation. If we now multiply the original equation in the same way but from the left we see in the same way that we can put $z_1=0$. Our simplified equation takes the form \[\lambda_1 e_{21}U(z_2)+\lambda_2 e_{12}+\lambda_3 U(z_2)e_{21}=0. \] Once again we multiply the equation from the right with an inverse, this time $U(-z_2)$. We do this in order to use the basis, $e_{12}, e_{21},$ and \[d_{12}=e_{11}-e_{22}=\left(\begin{matrix} 1 & 0 \\ 0 & -1 \end{matrix}\right)\] for the Lie algebra $\mathfrak{sl}_2(\mathbb{C})$ which is the tangent space for $\mbox{SL}_2(\mathbb{C})$ at the identity. Doing so we get \[\begin{aligned}\lambda_1& e_{21}+\lambda_2 e_{12}+\lambda_3 U(z_2)e_{21}U(-z_2)=\\&=\lambda_1e_{21}+\lambda_2e_{12}+\lambda_3(e_{21}-z_2e_{22}+z_2e_{11}-z_2^2e_{12})=\\ &=(\lambda_1+\lambda_3)e_{21}+(\lambda_2-z_2^2\lambda_3)e_{12}+z_2\lambda_3d_{12}=0.\end{aligned} \] Therefore the span is 3-dimensional when \[\det \left(\begin{matrix} 1 & 0 & 1 \\ 0 & 1 & -z_2^2 \\ 0 & 0 & z_2\end{matrix} \right)=z_2\neq 0.\] A similar calulation shows that $\Psi_3(z_2,z_3,z_4)=U(z_2)L(z_3)U(z_4)$ is submersive when $z_3\neq 0$. Now assume that $z_i\neq 0$ for some $2\le i \le N-1$. We can then find only trivial solutions to \[\lambda_1 \frac{\partial \Phi_N}{\partial z_{i-1}}+\lambda_2 \frac{\partial \Phi_N}{\partial z_i}+\lambda_3\frac{\partial \Phi_N}{\partial z_{i+1}}=A\left(\lambda_1 \frac{\partial \Phi_3}{\partial z_{i-1}}+\lambda_2 \frac{\partial \Phi_3}{\partial z_i}+\lambda_3\frac{\partial \Phi_3}{\partial z_{i+1}} \right)B=0, \] for appropriate matrices $A$ and $B$, by what we already have proved. Therefore the result follows.
\end{proof}

\subsection{The fibers of $\Phi_N$}

In order to understand the fibers of $\Phi_N$ let us do the following calculations. We have $$\left(\begin{matrix} 1 & 0 \\ z_1 & 1 \end{matrix}\right)\left(\begin{matrix} Q_1 & Q_2 \\ Q_3 & Q_4 \end{matrix}\right)\left(\begin{matrix} 1 & z_{2n} \\ 0 & 1 \end{matrix}\right)=\left(\begin{matrix} Q_1 & Q_2+Q_1z_{2n} \\ Q_3+Q_1z_{1} & Q_4+Q_2z_2+Q_3z_{2n}+Q_1z_1z_{2n} \end{matrix}\right) $$ and $$\left(\begin{matrix} 1 & 0 \\ z_1 & 1 \end{matrix}\right)\left(\begin{matrix} Q_1 & Q_2 \\ Q_3 & Q_4 \end{matrix}\right)\left(\begin{matrix} 1 & 0 \\ z_{2n+1} & 1 \end{matrix}\right)=\left(\begin{matrix} Q_1+Q_2z_{2n+1} & Q_2 \\ Q_3+Q_1z_1+Q_4z_{2n+1}+Q_2z_1z_{2n+1} & Q_4+Q_2z_{1} \end{matrix}\right). $$ Here $Q_1$, $Q_2$, $Q_3$, and $Q_4$ are polynomials in $z_2,\dots, n_{2n-1}$ or $z_2,\dots,z_{2n}$ depending on $N$ being even or odd. Remember that the map $\Phi_N$ is non-submersive precisely when all these variables are 0. That is at points where $$\left(\begin{matrix} Q_1 & Q_2 \\ Q_3 & Q_4 \end{matrix}\right)=\left(\begin{matrix} 1 & 0 \\ 0 & 1 \end{matrix}\right) .$$

\subsubsection{The fibers when $N$ is even}
We now try to understand the fibers for the map $\Phi_{2n}\colon \mathbb{C}^{2n}\setminus S_{2n}\to \mbox{SL}_2(\mathbb{C})$. Here we have the equations 
\begin{enumerate}
\item{$Q_1=a$} 
\item{$Q_2+Q_1z_{2n}=b$}
\item{$Q_3+Q_1z_1=c$} 
\item{$Q_4+Q_2z_1+Q_3z_{2n}+Q_1z_1z_{2n}=a$}
\end{enumerate} 
that describes the fiber $$\Phi_{2n}^{-1}\left(\left(\begin{matrix} a & b \\ c & d \end{matrix}\right)\right).$$ What will be important is that the fibers can be described as graphs over smooth manifolds. Note that we have $ad-bc=1$ and $Q_1Q_4-Q_2Q_3=1$.

We will first study the generic case when $a\neq 0$ and show that in this case equations (1), (2), and (3) implies (4). We have 
\begin{align*}
&a(Q_4+Q_2z_1+Q_3z_{2n}+Q_1z_1z_{2n}-d)= aQ_4+aQ_3z_1+aQ_2z_{2n}+aQ_4z_1z_{2n}-ad =\\
& = Q_1Q_4-ad+Q_1Q_3z_1+Q_1Q_2z_{2n}+Q_1^2z_1z_{2n}= \\ &= Q_2Q_3-bc+Q_1Q_3z_1+Q_1Q_2z_{2n}+Q_1^2z_1z_{2n}=\\
& = (b-Q_1z_{2n})(c-Q_1z_{1})-bc+(c-Q_4z_{2n})Q_1z_1+(b-Q_1z_{2n})Q_1z_{2n}+Q_1^2z_1z_{2n}=0
\end{align*}
and since $a\neq 0$ we see that (4) is automatically fulfilled if (1), (2), and (3) are.


We show that $Q_1=a$ defines a smooth surface when $a\neq 1$. We claim that the singularity on $Q_1=1$ is located where all variables are zero. We begin with case $N=4$. Here $Q_1(z_2,z_3)=1+z_2z_3$. We immediately see that $dQ_1=z_3\ dz_2+ z_2\ dz_3=0$ precisely when $z_2=z_3=0$ and this is what we want to prove in this case. Now assume that the claim is true when $N=2n-2$ and we study $N=2n$. We have 
\begin{equation*}
\begin{aligned}
&\left(\begin{matrix} Q_1(z_2,\dots,z_{2n-1}) & Q_2(z_2,\dots,z_{2n-1}) \\ Q_3(z_2,\dots,z_{2n-1}) & Q_4(z_2,\dots,z_{2n-1}) \end{matrix}\right)= \\ &=\left(\begin{matrix} \widetilde{Q}_1(z_2,\dots,z_{2n-3}) & \widetilde{Q}_2(z_2,\dots,z_{2n-3}) \\ \widetilde{Q}_3(z_2,\dots,z_{2n-3}) & \widetilde{Q}_4(z_2,\dots,z_{2n-3}) \end{matrix}\right) 
\left(\begin{matrix} 1 & z_{2n-2} \\ 0 & 1 \end{matrix}\right)
\left(\begin{matrix} 1 & 0 \\ z_{2n-1} & 1 \end{matrix}\right)
\end{aligned}
\end{equation*}
and we are interested in
\begin{equation*}
Q_1=z_{2n-1}\widetilde{Q}_2+(1+z_{2n-2}z_{2n-1})\widetilde{Q}_1.
\end{equation*}
We want to show that $dQ_1=0$ precisely where all variables are zero. We get 
\begin{equation*}
dQ_1=z_{2n-1}\ d\widetilde{Q}_2+\widetilde{Q}_2\ dz_{2n-1}+(1+z_{2n-2}z_{2n-1})\ d\widetilde{Q}_1+\widetilde{Q}_1(z_{2n-2}\ dz_{2n-1}+z_{2n-1}\ dz_{2n-2})
\end{equation*}
and hence $dQ_1=0$ if and only if 
\begin{itemize}
\item{$z_{2n-1}\widetilde{Q}_1=0$} 
\item{$\widetilde{Q}_2+z_{2n-2}\widetilde{Q}_1=0$}
\item{$z_{2n-1}\ d\widetilde{Q}_2+(1+z_{2n-2}z_{2n-1})\ d\widetilde{Q}_1=0$}
\end{itemize} 
If $\widetilde{Q}_1=0$ then $\widetilde{Q}_2=0$ and this implies that $Q_1=a=0$ and we are not considering this case here. Hence we must have $z_{2n-1}=0$ and this implies that $d\widetilde{Q}_1=0$. The induction hypothesis implies that $z_2=\dots=z_{2n-3}=0$. This implies in turn that $\widetilde{Q}_1=1$ and $\widetilde{Q}_2=0$ which implies that $z_{2n-2}=0$ and the claim follows by induction. 

It is now easy to write the generic fibers as graphs over $Q_1=a\neq 0$. We immediately see from (2) and (3) that $z_1=(c-Q_3)/a$ and $z_{2n}=(b-Q_2)/a$.

In the non-generic case $a=0$ we see that (1) and (2) implies (3) since $bc=Q_2Q_3=-1$ in this case. We see that $Q_1=0$ and $Q_2=b$ implies $Q_3=c$ since $b(Q_3-c)=Q_2Q_3-bc=0$. So in this case we need to investigate
\begin{itemize}
\item{$Q_1=0$} 
\item{$Q_2=b$}
\item{$Q_4+cz_1+bz_{2n}=d$}
\end{itemize}
Since we have a mapping into the special linear group $b\neq 0$. We claim that $Q_2=b$ defines a smooth complex hypersurface of in $\mathbb{C}^{2n}\cap \{z\in \mathbb{C}^{2n}:z_{2n-1}=z_{2n}=0\}$ and that we can write the non-generic fibers as graphs over this hypersurface. To see this notice that \begin{equation*}
\begin{aligned}
&\left(\begin{matrix} Q_1(z_2,\dots,z_{2n-1}) & Q_2(z_2,\dots,z_{2n-1}) \\ Q_3(z_2,\dots,z_{2n-1}) & Q_4(z_2,\dots,z_{2n-1}) \end{matrix}\right)=\\&=\left(\begin{matrix} R_1(z_2,\dots,z_{2n-2}) & R_2(z_2,\dots,z_{2n-2}) \\ R_3(z_2,\dots,z_{2n-2}) & R_4(z_2,\dots,z_{2n-2}) \end{matrix}\right) 
\left(\begin{matrix} 1 & 0 \\ z_{2n-1} & 1 \end{matrix}\right)
\end{aligned}
\end{equation*}
and we get the equations $R_2=b$ and $R_1+z_{2n-1}R_2=0$. We see that we can express $z_{2n-1}=-R_1/b$ and then $z_{2n}=(d-Q_4-cz_1)/b$.

We have 
\begin{itemize}
\item{$Q_2=\widetilde{Q}_2+z_{2n-2}\widetilde{Q}_1$}
\item{$Q_1=z_{2n-1}\widetilde{Q}_2+(1+z_{2n-2}z_{2n-1})\widetilde{Q}_1=\widetilde{Q}_1+z_{2n-1}Q_2$}
\end{itemize}
and we want to show that $Q_2=b\neq0$ defines a smooth complex hypersurface. Hence we want to show that $dQ_2\neq 0$ when $Q_1=0$ and $Q_2=b$. In order to do that we begin by showing that $dQ_1\wedge dQ_2\neq 0$. We see that 
\begin{equation*}
dQ_1\wedge dQ_2=(d\widetilde{Q}_1+z_{2n-1}\ dQ_2+ Q_2 \ dz_{2n-1})\wedge dQ_2=d\widetilde{Q}_1\wedge dQ_2 + Q_2 \ dz_{2n-1}\wedge dQ_2
\end{equation*}
and therefore $Q_2=0$ when $dQ_1\wedge dQ_2=0$. But $Q_2=b\neq 0$ so therefore we have $dQ_1\wedge dQ_2\neq 0$ when $Q_1=0$ and $Q_2=b\neq 0$. When $N=4$ we have that $Q_2=z_2$ so here we obviously have $dQ_2\neq 0$. Now assume that $d\widetilde{Q}_2\neq 0$ and study $$dQ_2=d(\widetilde{Q}_2+z_{2n-2}\widetilde{Q}_1)=d\widetilde{Q}_2+z_{2n-2}\ d\widetilde{Q}_1+\widetilde{Q}_1\ dz_{2n-2}.$$ We see that $dQ_2=0$ implies that $\widetilde{Q}_1=0$. If $dQ_2=0$ then $0=d\widetilde{Q}_2\wedge dQ_2=z_{2n-2}\ d\widetilde{Q}_2\wedge d\widetilde{Q}_1$ implies $z_{2n-2}=0$. But this in turn implies that $dQ_2=d\widetilde{Q}_2=0$ which is a contradiction. So by induction $dQ_2\neq 0$ and we see that also in the non-generic case the fibers are graphs over smooth manifolds.

We have shown that the fibers of $\Phi_{2n}$ can be described as graphs over smooth manifolds. This will let us construct $\mathbb{C}$-complete holomorphic vector fields that in turn will give us the fiber spray that we need.

\subsubsection{The fibers when $N$ is odd}
Using the same reasoning we can describe the fibers of the map $\Phi_{2n+1}\colon \mathbb{C}^{2n+1}\setminus S_{2n+1}\to \mbox{SL}_2(\mathbb{C})$ as graphs over smooth manifolds. The situation is very similar to when $N$ is even only here in the generic case we write the fiber as a graph over $Q_2=b\neq 0$ and in the non-generic case as a graph over $Q_1=a\neq 0$. We will skip doing the details.

\subsection{Stratified sprays associated with $\Phi_N$}

We will introduce the concept of a spray associated with a holomorphic submersion following \cite{GromovOPHSEB} and \cite{ForstnericOPHSWS}. First we introduce some notation and terminology. Let $h\colon Z \to X$ be a holomorphic submersion of a complex manifold $Z$ onto a complex manifold $X$. For any $x\in X$ the fiber over $x$ of this submersion will be denoted by $Z_x$. At each point $z\in Z$ the tangent space $T_zZ$ contains {\it the vertical tangent space} $VT_zZ=\ker Dh$. For holomorphic vector bundles $p\colon E \to Z$ we denote the zero element in the fiber $E_z$ by $0_z$.

\begin{Def}
Let $h\colon Z \to X$ be a holomorphic submersion of a complex manifold $Z$ onto a complex manifold $X$. A spray on $Z$ associated with $h$ is a triple $(E,p,s)$, where $p\colon E\to Z$ is a holomorphic vector bundle and $s\colon E\to Z$ is a holomorphic map such that for each $z\in Z$ we have 
\begin{itemize}
\item[(i)]{$s(E_z)\subset Z_{h(z)}$,}
\item[(ii)]{$s(0_z)=z$, and}
\item[(iii)]{the derivative $Ds(0_z)\colon T_{0_z}E\to T_zZ$ maps the subspace $E_z\subset T_{0_z}E$ surjectively onto the vertical tangent space $VT_zZ$.}
\end{itemize} 
\end{Def} 

\begin{Rem}
We will also say that the submersion admits a spray.
\end{Rem}

One way of constructing sprays associated with a holomorphic submersion is to find finitely many $\mathbb{C}$-complete vector fields that are tangent to the fibers and span the tangent space of the fibres at all points in $Z$. One can then use the flows $\varphi_j^t$ of these vector fields $V_j$ to define $s\colon Z\times \mathbb{C}^N\to Z$ via $s(z,t_1,\dots, t_N)=\varphi_1^{t_1}\circ \dots \circ \varphi_N^{t_N}(z)$ which gives a spray associated with $h$.

\subsubsection{The even-dimensional case}
The case when $N$ is even is only superficially different from the case when $N$ is odd. We do the even-dimensional case carefully and leave out most of the details for the odd-dimensional case. We begin with the generic case and study the polynomial equation $\Phi_N^{11}=Q_1=a\neq 0$. For ease of notation put $P=\Phi_N^{11}$. Let $P_j=\partial P/\partial z_j$ and define the complete vector fields $$V_{kl}=P_l\frac{\partial}{\partial z_k}-P_k\frac{\partial}{\partial z_l}.$$ We immediately see that $V_{kl}(P-a)=0$. The vector fields $V_{kl}$, $2\le k < l \le N-1$, spans the tangent spaces of the fibers at points where $P=a$ defines a manifold. This is because $dP=\partial P\neq 0$ at these points. Since we already know that $P=a\neq 0$ defines a manifold when $a\neq 1$ and the points where $P=1$ has singularities are located in $S_N$ we get the spanning property for the vector fields. The vector fields are complete since the coefficient functions are no more than linear in each variable, $P_l\frac{\partial}{\partial z_k}$ is independent of $z_l$, and $P_k \frac{\partial}{\partial z_l}$ is independent of $z_k$. Finally lift the vector fields onto the fibers of the holomorphic submersion and we have handled the generic case. We need to construct new vector fields to handle the case non-generic case $a=0$ and in order to use the results of \cite{ForstnericEHSCS} we will need so called stratified sprays. 

\begin{Def}
We say that a submersion $h\colon Z\to X$, where $X$ is a Stein space, admits stratified sprays if there is a descending chain of closed complex subspaces $X=X_m\supset \cdots \supset X_0$ such that each stratum $Y_k = X_k\setminus X_{k-1}$ is regular and the restricted submersion $h\colon Z|_{Y_k}\to Y_k$ admits a spray over a small neighborhood of any point $x\in Y_k$.  
\end{Def}

In \cite{ForstnericOPSSFB}, see also \cite{ForstnericEHSCS}, the following theorem is proven.
\begin{Thm}\label{t:forstnericprezelj}
Let $X$ be a Stein space with a descending chain of closed complex subspaces $X=X_m\supset \cdots \supset X_0$ such that each stratum $Y_k = X_k\setminus X_{k-1}$ is regular. Assume that $h\colon Z \to X$ is a holomorphic submersion which admits stratified sprays then any continuous section $f_0\colon X\to Z$ such that $f_0|_{X_0}$ is holomorphic can be deformed to a holomorphic section $f_1\colon X\to Z$ by a homotopy that is fixed on $X_0$. 
\end{Thm}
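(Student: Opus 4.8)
The plan is to recognize this as Gromov's Oka principle for sections of a holomorphic submersion admitting (stratified) sprays, and to prove it by the now-standard scheme of \emph{approximation plus gluing} along a normal exhaustion, combined with an outer induction over the stratification. The two analytic workhorses are Cartan's Theorems A and B on the Stein space $X$ and the spray itself, which converts nonlinear statements about nearby sections into linear statements about sections of a holomorphic vector bundle.

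First I would set up the \emph{spray linearization}. Given a holomorphic section $f$ over an open set $U\subset X$ and the spray $(E,p,s)$, the assignment $\xi \mapsto s\circ \xi$ sends holomorphic sections $\xi$ of the pulled-back bundle $f^*E$ (with $\xi$ near the zero section) to holomorphic sections of $h$ over $U$ near $f$; by property (iii) of the definition of a spray this map is a local diffeomorphism onto a neighborhood of $f$ in the space of sections. Thus every question about sections close to a given holomorphic one becomes a question about holomorphic sections of a vector bundle, where Oka--Weil approximation and Cartan's splitting lemma (Theorem B) are available.

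Next come the two core lemmas. For approximation: on an $\Oeul(X)$-convex compact set $K$, a section holomorphic near $K$ can be uniformly approximated by sections holomorphic on a larger neighborhood, which via the linearization reduces to Oka--Weil for sections of $f^*E$. For gluing: for a Cartan pair $(A,B)$ --- compacts with $\overline{A\setminus B}\cap \overline{B\setminus A}=\emptyset$ and with $A\cup B$, $A\cap B$ suitably convex --- two holomorphic sections that are close on $A\cap B$ can be amalgamated into one holomorphic section on $A\cup B$ close to both. One writes the two sections as $s\circ\xi_A$ and $s\circ\xi_B$ off a common reference, expresses their discrepancy over $A\cap B$ as a holomorphic section $\gamma$ of $f^*E$, splits it as $\gamma=\gamma_A-\gamma_B$ by Cartan's splitting lemma, and re-applies the spray; the nonlinearity forces one to iterate this correction and establish convergence, i.e.\ to solve the gluing by a contraction / implicit-function argument in Banach spaces of holomorphic sections with uniform estimates.

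With these lemmas in hand I would exhaust $X$ by compacts $K_1\subset K_2\subset\cdots$ coming from a strictly plurisubharmonic exhaustion, so that each passage from $K_j$ to $K_{j+1}$ is either a convex bump (handled by approximation) or the attachment of a handle (handled by gluing), and inductively deform the section to be holomorphic on ever larger sets while controlling the size of each correction so that the deformations converge to a global holomorphic section; recording the corrections as paths produces the homotopy. Finally I would run this entire argument as an outer induction over the strata: starting from the hypothesis that $f_0|_{X_0}$ is holomorphic, and assuming holomorphicity already achieved on $X_{k-1}$, I would use that $h$ restricted over the regular stratum $Y_k$ admits a genuine spray to carry out the approximation--gluing scheme on a neighborhood in $X_k$ relative to $X_{k-1}$, keeping the homotopy fixed there and hence ultimately fixed on $X_0$. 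The main obstacle is the gluing lemma: turning the formal spray-corrected gluing into a convergent construction, with estimates uniform enough to survive both the exhaustion and the passage across the singular strata, is where essentially all of the analytic difficulty lies.
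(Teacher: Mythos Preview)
The paper does not actually prove this theorem; it is quoted as a result of Forstneri\v{c} \cite{ForstnericOPSSFB} (see also \cite{ForstnericEHSCS}) and used as a black box. So there is no ``paper's own proof'' to compare against --- the authors simply cite the statement and then apply it to their specific stratified submersion $\Phi_N$.

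Your outline is a reasonable high-level summary of the Gromov--Forstneri\v{c} scheme that underlies the cited references: spray linearization reducing the problem to sections of a vector bundle, Oka--Weil approximation, a Cartan-pair gluing lemma with contraction-mapping convergence, an inductive exhaustion by sublevel sets of a plurisubharmonic exhaustion, and an outer induction over strata. That is indeed the architecture of the proofs in \cite{ForstnericOPSSFB} and \cite{ForstnericEHSCS}. What you have written is a plausible roadmap rather than a proof: the hard part, as you acknowledge, is the gluing lemma with uniform estimates and the bookkeeping needed to make the stratified relative version (keeping the homotopy fixed on $X_0$) go through, and none of that is actually carried out here. For the purposes of the present paper this is fine, since the authors themselves treat the theorem as an imported result; but you should be aware that what you have supplied is a sketch of the strategy, not a self-contained argument.
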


Our stratification will be $\mbox{SL}_2(\mathbb{C})\supset X_1\supset \emptyset$ where $$X_1=\left\{ \left(\begin{matrix} a & b \\ c & d\end{matrix}\right)\in \mbox{SL}_2(\mathbb{C}) ; a=0\right\}.$$ We have constructed a spray associated with $\Phi_N\colon \mathbb{C}^N|_{\mbox{SL}_2(\mathbb{C})\setminus X_1}\to \mbox{SL}_2(\mathbb{C})\setminus X_1$ and need one associated with $\Phi_N\colon \mathbb{C}^N|_{X_1}\to X_1$. So we need to consider the case $a=0$. The construction of complete vector fields will be done in the same way as the case $a\neq 0$ but with some minor modifications. Let $P=\Phi_N^{12}(z_1,z_2,\dots,z_{2n-2},0,0)$ (notice that we have $P=Q_2$ when $Q_1=0$) and define $$W_{kl}=P_l\frac{\partial}{\partial z_k}-P_k\frac{\partial}{\partial z_l}$$ for $1\le k< l \le N-2$ where $P_j=\partial P/\partial z_j$. These vector fields spans the tangent space of $Q_2=b\neq 0$ and are integrable for the same reason as in the case $a\neq 0$. Once again we lift the vector fields onto the fiber and we are done.

\subsubsection{The odd-dimensional case}
This works in the same way as the even-dimensional case. We only note that the stratification will be $\mbox{SL}_2(\mathbb{C})\supset X_1\supset \emptyset$ where $$X_1=\left\{ \left(\begin{matrix} a & b \\ c & d\end{matrix}\right)\in \mbox{SL}_2(\mathbb{C}) ; b=0\right\}$$ and that the bad set $S_N$ is contained in the fibers over $X_1$. However these points can be avoided and hence removed from the fibration and therefore presents no problem for us.

\section{Unipotent generation of null-homotopic holomorphic mappings into $\text{SL}_2( \mathbb{C})$}

\subsection{The first result on the number of factors}
 
Consider $\Phi_{N}\colon \mathbb{C}^{N}\to \mbox{SL}_2(\mathbb{C})$. By Lemma \ref{L:submersive} we know that the mapping is submersive outside the set $$S_N=\{z\in \mathbb{C}^{N}; z=(z_1,0,\dots,0,z_{N})\}.$$ Therefore the bundle $\xi=(\mathbb{C}^{N}\setminus S_N,\Phi_{N},\mbox{SL}_2(\mathbb{C}))$ has a submersive projection. Here we abuse notation slightly and write $\Phi_{N}=\Phi_{N}|_{\mathbb{C}^{N}\setminus S_N}$. The pull-back bundle $$f^*\xi=(f^*(\mathbb{C}^{N}\setminus S_N),f^*\Phi_{N},X)$$ also has a submersive projection. Here the total space of $f^*\xi$ is the subspace $$f^*(\mathbb{C}^{N}\setminus S_N)=\{(x,z)\in X\times (\mathbb{C}^{N}\setminus S_N);f(x)=\Phi_{N}(z)\}$$ and the projection is $f^*\Phi_{N}(x,z)=x$. We also have $f_\xi\colon f^*(\mathbb{C}^{N}\setminus S_N)\to \mathbb{C}^{N}\setminus S_N$ defined as $f_\xi(x,z)=z$. We get the commutative diagram $$\xymatrix{f^*(\mathbb{C}^{N}\setminus S_N) \ar[r]^{f_\xi} \ar[d]_{f^*\Phi_{N}} & \mathbb{C}^{N}\setminus S_N \ar[d]^{\Phi_{N}} \\ X \ar[r]_{f} & \mbox{SL}_2(\mathbb{C})}$$ and this induces a commutative diagram for the tangent spaces which lets us conclude that $f^*\xi$ has submersive projection and we saw in the previous section that it admits a stratified spray.

By Vaserstein's result there exists a continuous mapping $g\colon X\to \mathbb{C}^{N}$ such that $f(x)=\Phi_{N}(g(x))$. Assume that we know that $g(X)\cap S_N=\emptyset$. Then we get a global continuous section $f^*g\colon X \to f^*(\mathbb{C}^{N}\setminus S_N)$ defined as $f^*g(x)=(x,g(x))$ and we can use Theorem \ref{t:forstnericprezelj} to deform this section into a holomorphic section and this will show that we can write the map $f$ as a product of elementary matrices with holomorphic entries. In general we don't know if the continuous mapping $g$ is such that $g(X)\cap S_N=\emptyset$ but we can add two matrices in the factorization to make sure that we avoid the bad set. Assume that 
\begin{equation*}
f(x)= \left(\begin{matrix} 1& g_1(x) \\ 0 & 1 \end{matrix}\right) \left(\begin{matrix} 1 & 0 \\ g_2(x) & 1 \end{matrix}\right)
 \dots \left(\begin{matrix} 1 & 0 \\ g_N(x) & 1 \end{matrix}\right).
\end{equation*}
Trivially we have 
\begin{equation*}
\begin{aligned}
f(x)&= \left(\begin{matrix} 1& g_1(x) \\ 0 & 1 \end{matrix}\right) \left(\begin{matrix} 1& 1 \\ 0 & 1 \end{matrix}\right)\left(\begin{matrix} 1& 0 \\ 0 & 1 \end{matrix}\right)\left(\begin{matrix} 1& -1 \\ 0 & 1 \end{matrix}\right) \left(\begin{matrix} 1 & 0 \\ g_2(x) & 1 \end{matrix}\right)
 \dots \left(\begin{matrix} 1 & 0 \\ g_N(x) & 1 \end{matrix}\right) = \\ &= \left(\begin{matrix} 1& g_1(x)+1 \\ 0 & 1 \end{matrix}\right) \left(\begin{matrix} 1& 0 \\ 0 & 1 \end{matrix}\right)\left(\begin{matrix} 1& -1 \\ 0 & 1 \end{matrix}\right) \left(\begin{matrix} 1 & 0 \\ g_2(x) & 1 \end{matrix}\right)
 \dots \left(\begin{matrix} 1 & 0 \\ g_N(x) & 1 \end{matrix}\right)
\end{aligned}
\end{equation*}
and we see that we have a new factorization corresponding to the map $$\widetilde{g}(x)=(g_1(x)+1,0,-1,g_2(x),\dots,g_N(x))\in \mathbb{C}^{N+2}$$ which avoids the bad set $S_{N+2}$. The same trick also works when $N$ is odd. Therefore we have
\begin{Thm} \label{comparison}
Let $X$ be a finite dimensional Stein space and $f\colon X\to \mbox{SL}_2(\mathbb{C})$ be a holomorphic mapping that is null-homotopic.  Assume that there exists continuous mappings $g_1,\dots, g_{K}\colon X\to \mathbb{C}$ such that $$f(x)=M_{1}(g_1(x))M_{2}(g_2(x))\dots M_{K}(g_{K}(x)).$$
Then there exists holomorphic mappings $h_1,\dots, h_{K+2}\colon X\to \mathbb{C}$ such that $$f(x)=M_{1}(h_1(x))M_{2}(h_2(x))\dots M_{K+2}(h_{K+2}(x)).$$
\end{Thm}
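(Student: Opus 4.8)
The plan is to produce the holomorphic factorization as a holomorphic section of a pulled-back submersion, obtained from the given continuous factorization by the Oka principle of Theorem \ref{t:forstnericprezelj}. The one genuine obstruction is that the section coming from the continuous factorization need not avoid the non-submersive locus $S_N$ of $\Phi_N$ identified in Lemma \ref{L:submersive}, and it is precisely in repairing this that the two extra factors are spent.

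First I would recast the data geometrically. The hypothesis $f=M_1(g_1)\cdots M_K(g_K)$ says exactly that the continuous map $g=(g_1,\dots,g_K)\colon X\to\mathbb{C}^K$ satisfies $\Phi_K\circ g=f$. Conversely, a holomorphic factorization into $N$ factors is the same thing as a holomorphic section $h\colon X\to\mathbb{C}^N\setminus S_N$ of the pull-back bundle $f^*\xi=(f^*(\mathbb{C}^N\setminus S_N),f^*\Phi_N,X)$ of $\xi=(\mathbb{C}^N\setminus S_N,\Phi_N,\mbox{SL}_2(\mathbb{C}))$, since such a section is a map $h$ with $\Phi_N\circ h=f$. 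Because $S_N$ has been deleted, $\Phi_N$ is a surjective submersion by Lemma \ref{L:submersive}, and the commuting square defining $f^*\xi$ shows that $f^*\Phi_N$ is again submersive. The stratified sprays built for $\Phi_N$ over the stratification $\mbox{SL}_2(\mathbb{C})\supset X_1\supset\emptyset$ in the previous section pull back under $f$, so $f^*\Phi_N$ admits stratified sprays over the induced stratification $X\supset f^{-1}(X_1)\supset\emptyset$ (refined, if necessary, so that the strata are regular). Since the lowest stratum is empty, the holomorphy condition on it in Theorem \ref{t:forstnericprezelj} is vacuous, and so all that is needed to start the machine is a single \emph{continuous} section.

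The obstacle, and the reason two factors are added, is that $g(X)$ may meet $S_K$, so that $x\mapsto(x,g(x))$ need not land in $f^*(\mathbb{C}^K\setminus S_K)$. To remedy this I would insert a trivial block right after the first factor: writing the first factor as $U(g_1)$, use the identity $U(1)\,I\,U(-1)=I$ to rewrite the product with $K+2$ factors, whose parity pattern is unchanged and whose associated map is $\widetilde{g}=(g_1+1,0,-1,g_2,\dots,g_K)\in\mathbb{C}^{K+2}$. The third coordinate of $\widetilde{g}$ is the nonzero constant $-1$, so not all of the middle coordinates $z_2,\dots,z_{K+1}$ vanish and hence $\widetilde{g}(X)\cap S_{K+2}=\emptyset$ by the form of $S_{K+2}$ in Lemma \ref{L:submersive}; thus $x\mapsto(x,\widetilde{g}(x))$ is a bona fide continuous section of $f^*\Phi_{K+2}$. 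The mirror insertion, with the roles of $U$ and $L$ exchanged, handles the remaining parity of $K$.

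Finally I would apply Theorem \ref{t:forstnericprezelj} to the stratified-spray submersion $f^*\Phi_{K+2}$ and the continuous section $x\mapsto(x,\widetilde{g}(x))$, obtaining a homotopic holomorphic section; reading off its components gives holomorphic maps $h_1,\dots,h_{K+2}\colon X\to\mathbb{C}$ with $f=M_1(h_1)\cdots M_{K+2}(h_{K+2})$, as required. The analytic heart --- the existence of the stratified sprays coming from the fiberwise complete fields $V_{kl},W_{kl}$ and the description of the fibers as graphs over smooth hypersurfaces --- is already available from the previous section, so within this theorem the only real work is the bookkeeping that keeps the section off the bad set. I expect the sole subtle point to be verifying that the single inserted constant coordinate really keeps the \emph{entire} image of $\widetilde{g}$ off $S_{K+2}$ for either parity of $K$, together with checking that the pulled-back, and if necessary refined, stratification meets the regularity hypothesis of Theorem \ref{t:forstnericprezelj}.
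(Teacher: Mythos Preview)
Your proposal is correct and follows essentially the same approach as the paper: recast the problem as finding a section of the pulled-back submersion $f^*\Phi_N$, insert the trivial block $U(1)\,I\,U(-1)$ (or its mirror) to push the continuous section off $S_{K+2}$ via $\widetilde g=(g_1+1,0,-1,g_2,\dots,g_K)$, and then apply Theorem~\ref{t:forstnericprezelj} using the stratified sprays already constructed. Your identification of the two delicate points---that the inserted constant $-1$ keeps $\widetilde g$ off $S_{K+2}$ for either parity, and that the pulled-back stratification is regular---matches exactly what the paper handles (the latter only implicitly).
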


\section{The example}\label{ex}
The counterexample to factorization  in the algebraic case ($\mathbb
 {C} [z, w]$) of Cohn is $$\left(\begin{matrix} 1+zw & z^2 \\ -w^2 & 1-zw  \end{matrix}\right) $$ and we will find a holomorphic and a topological factorization of this matrix. The minimal number of factors in the continuous case will be 4 and we will show that in the holomorphic case we need 5 factors. 

Let us start with giving a concrete holomorphic factorization. 
Of course the existence of a factorization with $5$ factors follows also from Theorem \ref{dim2}.
The first step is $$\left(\begin{matrix} 1 & -h_1(z,w) \\ 0 & 1 \end{matrix}\right)\left(\begin{matrix} 1+zw & z^2 \\ -w^2 & 1-zw \end{matrix}\right)= \left(\begin{matrix} e^{zw} & z^2 - (1-zw)h_1(z,w) \\ -w^2 & 1-zw \end{matrix}\right) $$ where $$h_1(z,w)=\frac{e^{zw}-1-zw}{w^2}.$$ Putting $h_2(z,w)=(1-w^2)e^{-zw}$ we get $$ \left(\begin{matrix} 1 & 0 \\ -h_2(z,w) & 1 \end{matrix}\right) \left(\begin{matrix} e^{zw} & z^2 - (1-zw)h_1(z,w) \\ -w^2 & 1-zw   \end{matrix}\right) = \left(\begin{matrix} e^{zw} & H(z,w) \\ 1 & G(z,w) \end{matrix}\right).$$ Using $h_3(z,w)=e^{zw}-1$ and $h_4(z,w)=1$ we see $$ \left(\begin{matrix} 1 & -h_3(z,w) \\ 0 & 1 \end{matrix}\right) \left(\begin{matrix} e^{zw} & H(z,w) \\ 1 & G(z,w)\end{matrix}\right) = \left(\begin{matrix} 1 & H_2(z,w) \\ 1 & G_2(z,w) \end{matrix}\right)$$ and  $$ \left(\begin{matrix} 1 & 0 \\ -h_4(z,w) & 1 \end{matrix}\right) \left(\begin{matrix} 1 & H_2(z,w) \\ 1 & G_2(z,w)\end{matrix}\right) = \left(\begin{matrix} 1 & H_2(z,w) \\ 0 & G_3(z,w) \end{matrix}\right).$$ Now $G_3(z,w)=1$ so we have a factorization. We get  $$ \left(\begin{matrix} 1+zw & z^2 \\ -w^2 & 1-zw  \end{matrix}\right) = \left(\begin{matrix} 1 & h_1 \\ 0 & 1 \end{matrix}\right) \left(\begin{matrix} 1 & 0 \\ h_2 & 1 \end{matrix}\right) \left(\begin{matrix} 1 & h_3 \\ 0 & 1 \end{matrix}\right) \left(\begin{matrix} 1 & 0 \\ h_4 & 1 \end{matrix}\right) \left(\begin{matrix} 1 & H_2 \\ 0 &1 \end{matrix}\right).$$ 

Now we analyze what it means  to find a factorization using just 4 matrices. If we can find $h_1, h_2, h_3,$ and $h_4$ such that $$ \left(\begin{matrix} 1+zw & z^2 \\ -w^2 & 1-zw  \end{matrix}\right) = \left(\begin{matrix} 1 & h_1 \\ 0 & 1 \end{matrix}\right) \left(\begin{matrix} 1 & 0 \\ h_2 & 1 \end{matrix}\right) \left(\begin{matrix} 1 & h_3 \\ 0 & 1 \end{matrix}\right) \left(\begin{matrix} 1 & 0 \\ h_4 & 1 \end{matrix}\right)$$ then we get the relations
\begin{equation*}
\begin{aligned}
1&+h_2h_3=1-zw \\
h_1&+h_3+h_1h_2h_3=z^2 \\
h_2&+h_4+h_2h_3h_4=-w^2 \\
1&+h_1h_2+h_1h_4+h_3h_4+h_1h_2h_3h_4=1+zw.
\end{aligned}
\end{equation*}
Rewriting we get 
\begin{equation*}
\begin{aligned}
&h_2h_3=-zw \\
&(1-zw)h_1+h_3=z^2 \\
&h_2+(1-zw)h_4=-w^2 \\
&h_1h_2+(1-zw)h_1h_4+h_3h_4=zw.
\end{aligned}
\end{equation*}
Now on $zw=D$ these relations become 
\begin{equation*}
\begin{aligned}
&h_2h_3=-D \\
&(1-D)h_1+h_3=z^2 \\
&h_2+(1-D)h_4=-w^2 \\
&h_1h_2+(1-D)h_1h_4+h_3h_4=D.
\end{aligned}
\end{equation*}
and assuming for the moment that $D\neq 0$ and $D\neq 1$ we see that 
\begin{equation*}
\begin{aligned}
&h_2=-D/h_3=-zw/h_3 \\
&h_1=(z^2-h_3)/(1-D)= (z^2-h_3)/(1-zw)\\
&h_4=\frac{-h_3w^2+D}{h_3(1-D)}= \frac{-h_3w^2+zw}{h_3(1-zw)}.
\end{aligned}
\end{equation*}
We see that any choice of $h_3\colon \mathbb{C}^2\setminus (\{zw=0\}\cup \{ zw=1\})\to \mathbb{C}^*$ gives a factorization in this part of $\mathbb{C}^2$. In other words the fibre of the fibration
$f^* (\Phi_4)$ over $ \mathbb{C}^2\setminus (\{zw=0\}\cup \{ zw=1\})= \{  (z,w) \in \C^2  : D \in \C \setminus \{0, 1\}$  is $\C^*$ and the fibration is trivial there.
 To get a factorization we must be able to extend this function to the whole of $\mathbb{C}^2$ so that $h_1, h_2$ and $h_4$ still are well-defined.  

When $D=1$ we have
\begin{equation}\label{e:dequal1}
\begin{aligned}
h_3&=z^2 \\
h_2&=-w^2 \\
1&=-w^2h_1+z^2h_4=zw
\end{aligned}
\end{equation} 
and we see that we can pick $h_1$ arbitrary and use the last equation to define $h_4$. In other words the fibre of $f^* (\Phi_4)$ here is $\C$ and again the fibration is trivial when restricted to $D=1$.

Just to complete the picture we remark that over the set $ (\{zw=0\}$, i.e., $D=0$ the fibre  of $f^* (\Phi_4)$  is the cross of axis and the point $(0, 0)$ in the cross of axis is the singular point in those fibres.

A continuous section $s=(h_1,h_2,h_3,h_4)\colon \mathbb{C}^2\setminus \{zw=0 \}$ gives a map $h_3\colon \mathbb{C}^2\setminus \{zw=0\}\to \mathbb{C}^*$ such that $h_3|_{\{zw=1\}}=z^2$ by (\ref{e:dequal1}). Now view $\mathbb{C}^2\setminus \{zw=0\}$ as a bundle over $\mathbb{C}^*$ with fibers $\mathbb{C}^*$ via $zw=D$. Thus $h_3$ gives a family of maps $h_D\colon \mathbb{C}^*\to \mathbb{C}^*$.  Since $h_D$ for $D=1$ is prescribed, the degree of these mappings is $2$ for all $D \in \C\setminus \{0\}$
(depending on some fixed parametrization of the fibre involved, we could as well choose parametrization to get $-2$). Continuous mappings $\C^* \to \C^*$ are homotopic iff they have the same degree. Therefore, if we find a continuous section of the fibration in a neighborhood $U$ of $ D =0$ of the form $U = \{\vert D \vert < \epsilon\}$ having degree $2$ for $D\ne 0$, we can join it to  a section in a neighborhood of $D=1$ (say given by $h_3=z^2$, $h_2=-w/z$, $h_1=0$, and $h_4=w/z$). 
Here is that section:

Define $h_3=w^2/(|w|^{3/2})$ outside $zw=0$ and the other mappings becomes $h_2=-(z|w|^{3/2})/w$, $h_1=(z^2-(w^2/(|w|^{3/2})))/(1-zw)$, and $h_4=-w^2+(z|w|^{3/2}/w)$. These mappings all extends to the whole of $D=0$ and the required relations are satisfied on $zw=0$. Also this extension of $h_3$ gives mapping degree 2. Remark that this section does not avoid the singularity set. Over the point $(0, 0)$ its value is the double point in the cross of axis. Now let's show that there is no continuous section
of  $f^* (\Phi_4)$ avoiding the singularity set $S_4$. Indeed, removing $S_4$ means removing the 
zero point in the fibres over $D=0$, the fibre over $D=0$ becomes now a disjoint union of two copies
of $\C^*$.  Since $D=0$ is a connected set the  section has to be entirely  in one of the copies. 
Now there are two ways of continuing our family of $\C^*$'s parametrized by $D \in C^*$ in into $D=0$:

\begin{eqnarray*}
\left(z, \frac{ D}{z} \right) & \xrightarrow{D \to 0} &  \  (z, 0)
\end{eqnarray*}
 and
 \begin{eqnarray*}
\left(\frac{ D}{z}, z \right) & \xrightarrow{D \to 0} &  \  (0, z)
\end{eqnarray*}

One continuation lands in the $z$-axis, the other in the $w$-axis. Since they are achieved by using different parametrizations of $\C^*$, the corresponding degrees for the map into $\C^*$ are different, $+2$ and $-2$. But shrinking circles  in $D=0$ towards $(0,0)$, one sees that the map to $\C^*$ has to be null-homotopic, i.e., to have degree $0$. 

Next we prove that there is no holomorphic factorization by $4$ factors: The condition $h_2h_3=-zw$ means by division theory in the ring of holomorphic functions that there are 4 possibilities for $h_3$
up to nowhere vanishing functions (units) which are null-homotopic and therefore do not contribute to
degree:  $1$, $z$, $w$ or $zw$. The corresponding degrees are $0$ and $\pm 1$, different from $2$. Thus there is no holomorphic section of $f^* (\Phi_4)$.
Summarizing we have proved:

\begin{Prop}  The matrix $$\left(\begin{matrix} 1+zw & z^2 \\ -w^2 & 1-zw  \end{matrix}\right) \in \mbox{SL}_2( \C [z,w])$$ (which is known to be not factorizable by elementary matrices with polynomial entries) can be factorized as a product of $4$ continuous elementary matrices and as a product of $5$ holomorphic
elementary matrices. Both numbers are minimal in the respective ring. Moreover any factorization of it
by $4$ continuous matrices has to meet the singularity set in the corresponding fibration over $\C^2$.
\end{Prop}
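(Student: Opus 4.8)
The plan is to split the statement into four assertions and treat each with the fiber analysis already established: (a) the Cohn matrix factors into $4$ continuous and into $5$ holomorphic elementary matrices; (b) no continuous factorization uses only $3$ factors; (c) no holomorphic factorization uses only $4$ factors; (d) every continuous $4$-factorization meets $S_4$. For (a) I would simply invoke the two explicit factorizations built above, namely the holomorphic one assembled from $h_1,\dots,H_2$ and the continuous one coming from the section $h_3=w^2/|w|^{3/2}$ with the induced $h_1,h_2,h_4$; so only (b)--(d) need argument.

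For the continuous lower bound (b) I would observe that, after merging adjacent factors of the same type, a product of three elementary matrices has the shape $U(a)L(b)U(c)$ or $L(a)U(b)L(c)$, while a product of at most two factors leaves a diagonal entry identically $1$, which the Cohn matrix does not have. Comparing entries of $U(a)L(b)U(c)$ with the Cohn matrix forces $b=-w^2$ from the $(2,1)$-entry and then $aw=-z$ off $\{w=0\}$, a relation admitting no continuous solution $a$ near a point $(z_0,0)$ with $z_0\neq0$. An analogous comparison for $L(a)U(b)L(c)$ forces $az=-w$, again with no continuous solution. Hence $4$ is the minimal continuous count.

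For the holomorphic lower bound (c) I would start from the relation $h_2h_3=-zw$ coming from the $(2,2)$-entry. Since the nowhere-vanishing entire functions on $\C^2$ are exactly the exponentials, the divisor of $h_3$ is a subdivisor of $\operatorname{div}(zw)=\{z=0\}+\{w=0\}$, forcing $h_3=u\,z^{\epsilon_1}w^{\epsilon_2}$ with $\epsilon_1,\epsilon_2\in\{0,1\}$ and $u$ a unit. Restricting to the curve $\{zw=D\}\cong\C^*$ and computing the winding number, the unit contributes $0$ and one reads off degree $\epsilon_1-\epsilon_2\in\{0,\pm1\}$. But the prescription at $D=1$ in \eqref{e:dequal1} forces degree $2$, so no such $h_3$ exists and $5$ is the minimal holomorphic count.

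Assertion (d) is the delicate point and the one I expect to be the main obstacle. The idea is that deleting $S_4$ removes the singular double point from each fiber over $\{D=0\}$, splitting that fiber into two disjoint copies of $\C^*$, so a continuous section over the connected set $\{D=0\}$ must lie in a single sheet. The two sheets are reached by the degenerations $(z,D/z)\to(z,0)$ and $(D/z,z)\to(0,z)$ as $D\to0$; because these use opposite parametrizations of the fiber they carry the opposite degrees $+2$ and $-2$, whereas shrinking loops toward $(0,0)$ forces the associated map to $\C^*$ to be null-homotopic, hence of degree $0$. The contradiction between the required degree $\pm2$ and the forced degree $0$ is the heart of the matter; the care lies in reading the degree off consistently as the fiber degenerates and in verifying that it is precisely the removal of $S_4$ that produces the contradiction.
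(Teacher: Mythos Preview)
Your proposal is correct and follows essentially the same route as the paper: the explicit $5$-term holomorphic and $4$-term continuous factorizations for (a), the divisor/degree argument $h_3=u\,z^{\epsilon_1}w^{\epsilon_2}$ with degree in $\{0,\pm1\}$ versus the forced degree $2$ at $D=1$ for (c), and the two-sheet degeneration over $D=0$ with the $+2$/$-2$ versus $0$ contradiction for (d) are exactly the paper's arguments. The only place you add something is (b): the paper treats the continuous lower bound as immediate (cf.\ the remark in the introduction that three elementary factors do not surject onto $\mbox{SL}_2(\mathbb C)$), whereas you give the concrete entry comparison showing $aw=-z$ (resp.\ $az=-w$) has no continuous solution near the axes---a harmless and slightly more self-contained elaboration.
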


\section{Numerical bounds when $\dim X\le 2$}

We will use obstruction theory to get an upper bound for the number of factors needed when $\dim X\le 2$. 

\subsection{The one-dimensional case}
We begin by describing the situation when $\dim X=1$ and we will show that 4 factors are enough. We write $$\Phi_4(u,z_1,z_2,v)=\left(\begin{matrix} 1 & 0 \\ u & 1 \end{matrix}\right)\left(\begin{matrix} P_1 & P_2 \\ P_3 & P_4 \end{matrix}\right)\left(\begin{matrix} 1 & v \\ 0 & 1 \end{matrix}\right)$$ where $P_1=1+z_1z_2$. The map $\Phi_4$ is submersive outside $\{z_1=z_2=0\}$ which is contained in the set $\tilde{Z}=\Phi_4^{-1}(Z)$ where $$Z=\{\left(\begin{matrix} 1 & b \\ c & d \end{matrix}\right)\}.$$ The stratification of $X$ is $X\supset f^{-1}(Z)\supset \emptyset$. Note that this is not the stratification used to construct the stratified spray. We now construct a section over $f^{-1}(Z)$ using 4 matrices. We simply write $$\left(\begin{matrix} 1 & b \\ c & d \end{matrix}\right)=\left(\begin{matrix} 1 & 0 \\ c & 1 \end{matrix}\right)\left(\begin{matrix} 1 & b \\ 0 & 1 \end{matrix}\right)=\left(\begin{matrix} 1 & 0 \\ c-1 & 1 \end{matrix}\right)\left(\begin{matrix} 1 & 0 \\ 0 & 1 \end{matrix}\right)\left(\begin{matrix} 1 & 0 \\ 1 & 1 \end{matrix}\right)\left(\begin{matrix} 1 & b \\ 0 & 1 \end{matrix}\right).$$ We view this as a section of $f^*\Phi_4$ over $f^{-1}(Z)$ which we can do this because of the constant matrix $\left(\begin{matrix} 1 & 0 \\ 1 & 1 \end{matrix}\right)$ in the factorization. Since $f^*\Phi_4$ is submersive we can extend this section into a neighborhood $U\supset f^{-1}(Z)$. We need to extend this section over the whole of $X$ and this is where we need obstruction theory. The obstructions for extending the section live in the relative cohomology groups $$H^{i+1}(X\setminus f^{-1}(Z),U\setminus f^{-1}(Z),\pi_i(F))$$ for $i\ge 1$ where $F$ is the fiber in the {\it trivial} bundle over $X\setminus f^{-1}(Z)$. The triviality follows since we can pass from fiber $\{z_1z_2=\alpha\}$ to fiber $\{z_1z_2=\beta\}$ via the transformation $T_{\alpha,\beta}(z_1,z_2)=(z_1,\alpha^{-1}\beta z_2,)$ for $\alpha,\beta \neq 0$. We see that the fiber $F=\{z_1z_2=\alpha\}\cong \C^*$.

We calculate the relative cohomology groups $$H^{i+1}(X\setminus f^{-1}(Z),U\setminus f^{-1}(Z),\pi_i(F))$$ for $i\ge 1$. By excision these are the same as $$H^{i+1}(X,f^{-1}(Z),\pi_i(F)).$$ Study the diagram $$ \xymatrix{H^1(f^{-1}(Z),\pi_1(F)) \ar[r] & H^2(X,f^{-1}(Z),\pi_1(F))\ar[r] & H^2(X,\pi_1(F))} $$ Now $X$ is Stein and we may assume that $f^{-1}(Z)$ is a discrete point set. We get $$H^2(X,f^{-1}(Z),\pi_1(F))=0.$$ We also see that $ H^{i+1}(X,f^{-1}(Z),\pi_i(F))=0 $ when $i\ge 2$ in the same way. 

Write $X=\cup_{i=1}^{\infty}X^{i}$ where each $X^i$ is irreducible. Then either $f^{-1}(Z)\cap X^i=X^i$ or $f^{-1}(Z)\cap X^i$ is a point set. On the components where $f^{-1}(Z)\cap X^i=X^i$ we use the explicit factorization we constructed above and these components intersect the rest of the components in a point set. We can therefore assume that $f^{-1}(Z)$ is a point set.

Since all obstructions for extension of the section vanish we get a factorization using 4 elementary matrices with continuous entries. Using the spray we can homotope the section to a holomorphic section and we get a factorization of the matrix using 4 elementary matrices with holomorphic entries. We have

\begin{Thm}\label{dim1}
	Let $X$ be a one-dimensional Stein space and $f\colon X\to \mbox{SL}_2(\mathbb{C})$ be a holomorphic mapping. Then there exists holomorphic mappings $g_1,\dots, g_{4}\colon X\to \mathbb{C}$ such that $$f(x)=\left(\begin{matrix}1 & 0 \\g_1(x) & 1\end{matrix}\right)\left(\begin{matrix}1 & g_2(x) \\ 0 & 1\end{matrix}\right)\left(\begin{matrix}1 & 0 \\g_3(x) & 1\end{matrix}\right)\left(\begin{matrix}1 & g_4(x) \\ 0 & 1\end{matrix}\right).$$
\end{Thm}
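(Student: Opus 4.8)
The plan is to recognize a factorization $f = L(g_1)U(g_2)L(g_3)U(g_4)$ as a holomorphic section of the pulled-back submersion $f^*\Phi_4\colon f^*(\mathbb{C}^4\setminus S_4)\to X$, where $\Phi_4(u,z_1,z_2,v)=L(u)U(z_1)L(z_2)U(v)$ and, by Lemma \ref{L:submersive}, $S_4=\{z_1=z_2=0\}$ is the non-submersive locus. Since the deepest stratum of the spray stratification of $\mbox{SL}_2(\mathbb{C})$ (namely $\mbox{SL}_2(\mathbb{C})\supset\{a=0\}\supset\emptyset$) is empty, the holomorphicity precondition in Theorem \ref{t:forstnericprezelj} is vacuous; hence it suffices to produce any \emph{continuous} section of $f^*\Phi_4$, i.e.\ a continuous factorization whose middle coordinates avoid $S_4$. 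The stratified spray built for $\Phi_4$ then deforms it to a holomorphic section, which is the desired holomorphic factorization.

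To build the continuous section I would stratify $X$ by $X\supset f^{-1}(Z)\supset\emptyset$ with $Z=\{a=1\}\subset\mbox{SL}_2(\mathbb{C})$, chosen so that the whole bad set $S_4$ sits over $Z$ (indeed $\Phi_4(u,0,0,v)$ has upper-left entry $1$). Over $f^{-1}(Z)$ there is an explicit section: if $f(x)=\left(\begin{smallmatrix}1 & b\\ c & d\end{smallmatrix}\right)$ then $f=L(c)U(b)=L(c-1)\,U(0)\,L(1)\,U(b)$, which corresponds to the point $(c-1,0,1,b)$. The artificially inserted constant $L(1)$ pins the middle coordinates to $(z_1,z_2)=(0,1)\neq(0,0)$, so the section already avoids $S_4$; this is the decisive device that keeps us in the submersive locus. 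As $f^*\Phi_4$ is a submersion, this section extends to a section over an open neighborhood $U\supset f^{-1}(Z)$.

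The main work is to extend the section over all of $X$, and here obstruction theory is the crux. Over $X\setminus f^{-1}(Z)$ the bundle is trivial with fiber $\mathbb{C}^*$: the equation $1+z_1z_2=a$ forces $z_1z_2=a-1\neq0$, so each fiber is $\{z_1z_2=a-1\}\cong\mathbb{C}^*$, and $(z_1,z_2)\mapsto(z_1,\alpha^{-1}\beta z_2)$ identifies the fibers over $\{z_1z_2=\alpha\}$ and $\{z_1z_2=\beta\}$. Consequently the obstructions to extending the section lie in $H^{i+1}(X\setminus f^{-1}(Z),U\setminus f^{-1}(Z);\pi_i(\mathbb{C}^*))$ for $i\ge1$, which by excision equal $H^{i+1}(X,f^{-1}(Z);\pi_i(\mathbb{C}^*))$. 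Since $\mathbb{C}^*\simeq S^1$, only the term $i=1$ can be nonzero, namely $H^2(X,f^{-1}(Z);\mathbb{Z})$. From the exact sequence $H^1(f^{-1}(Z))\to H^2(X,f^{-1}(Z))\to H^2(X)$, together with $H^2(X;\mathbb{Z})=0$ (a one-dimensional Stein space has the homotopy type of a $1$-complex) and the reduction of $f^{-1}(Z)$ to a discrete set (so $H^1(f^{-1}(Z))=0$), this obstruction group vanishes.

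A small bookkeeping point inside the last step is the reduction to $f^{-1}(Z)$ being a point set: writing $X=\bigcup_i X^i$ into irreducible components, on any component with $f^{-1}(Z)\cap X^i=X^i$ one simply uses the explicit factorization above, and such components meet the rest in a discrete set, after which $f^{-1}(Z)$ may be assumed discrete. With all obstructions vanishing we obtain a continuous section of $f^*\Phi_4$ avoiding $S_4$; applying Theorem \ref{t:forstnericprezelj} through the stratified spray deforms it to a holomorphic section and hence to a holomorphic factorization into four factors. I expect the obstruction computation — identifying the relevant fiber as $\mathbb{C}^*$ and collapsing all obstructions to the single group $H^2(X,f^{-1}(Z);\mathbb{Z})$, which vanishes precisely because $\dim_{\mathbb{C}}X=1$ — to be the true obstacle; the passage from a continuous to a holomorphic section is then the by-now-standard Oka-principle machinery.
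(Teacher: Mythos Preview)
Your proposal is correct and follows essentially the same route as the paper: the same explicit section $(c-1,0,1,b)$ over $f^{-1}(\{a=1\})$, the same extension to a neighborhood, the same identification of the fiber over $X\setminus f^{-1}(Z)$ as $\mathbb{C}^*$, and the same vanishing of the obstruction group $H^2(X,f^{-1}(Z);\mathbb{Z})$ via the long exact sequence together with the reduction of $f^{-1}(Z)$ to a discrete set by passing to irreducible components. Your observation that the higher obstructions vanish because $\pi_i(\mathbb{C}^*)=0$ for $i\ge 2$ is a slight sharpening of the paper's dimension-based argument, but otherwise the two proofs coincide.
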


\subsection{The two-dimensional case}
We now turn to the case $\dim X=2$. Here we will show that 5 factors are enough. Remember that $$\Phi_5(u,z_1,z_2,z_3,v)=\left(\begin{matrix} 1 & 0 \\ u & 1 \end{matrix}\right)\left(\begin{matrix} P_1 & P_2 \\ P_3 & P_4 \end{matrix}\right)\left(\begin{matrix} 1 & 0 \\ v & 1 \end{matrix}\right)$$ where $P_2=z_1+z_3+z_1z_2z_3$. The map $\Phi_5$ is submersive outside $\{z_1=z_2=z_3=0\}$ which is contained in the set $\tilde{Z}=\Phi_5^{-1}(Z)$ where $$Z=\{\left(\begin{matrix} a & 0 \\ c & a^{-1} \end{matrix}\right)\}.$$The stratification of $X$ is $X\supset f^{-1}(Z)\supset \emptyset$. We now construct a section over $f^{-1}(Z)$ using 4 matrices. We simply write $$\left(\begin{matrix} a & 0 \\ c & a^{-1} \end{matrix}\right)=\left(\begin{matrix} 1 & 0 \\ a^{-1}(c-1) & 1 \end{matrix}\right)\left(\begin{matrix} 1 & a-1 \\ 0 & 1 \end{matrix}\right)\left(\begin{matrix} 1 & 0 \\ 1 & 1 \end{matrix}\right)\left(\begin{matrix} 1 & a^{-1}-1 \\ 0 & 1 \end{matrix}\right).$$ We then add an extra identity matrix at the end and view this as a section of $f^*\Phi_5$ over $f^{-1}(Z)$. We can do this because of the constant matrix $\left(\begin{matrix} 1 & 0 \\ 1 & 1 \end{matrix}\right)$ in our factorization. Since $f^*\Phi_5$ is submersive we can extend this section into a neighborhood $U\supset f^{-1}(Z)$. We need to extend this section over the whole of $X$ and this is where we need obstruction theory. The obstructions for extending the section are located in the relative cohomology groups $$H^{i+1}(X\setminus f^{-1}(Z),U\setminus f^{-1}(Z),\pi_i(F))$$ for $i\ge 1$ where $F$ is the fiber in the {\it trivial} bundle over $X\setminus f^{-1}(Z)$. The triviality follows since we can pass from fiber to fiber via the transformation $T_\alpha(z_1,z_2,z_3)=(\alpha z_1, \alpha^{-1}z_2,\alpha z_3)$ for $\alpha\neq 0$.  The fiber $F$ is given by $F=\{z_1+z_3+z_1z_2z_3=1\}$.

The relative cohomology groups $$H^{i+1}(X\setminus f^{-1}(Z),U\setminus f^{-1}(Z),\pi_i(F))$$ for $i\ge 1$ are easily calculated. By excision these are the same as $$H^{i+1}(X,f^{-1}(Z),\pi_i(F)).$$ First we have $$H^{2}(X,f^{-1}(Z),\pi_1(F))=0$$ trivially since $\pi_1(F)=0$ by Lemma \ref{simplyconnected} below. Study the diagram $$ \xymatrix{H^2(f^{-1}(Z),\pi_2(F)) \ar[r] & H^3(X,f^{-1}(Z),\pi_2(F))\ar[r] & H^3(X,\pi_2(F))} $$ Now both $X$ and $f^{-1}(Z)$ are Stein and we may assume that $f^{-1}(Z)$ has dimension 1 or 0, see below. We get $H^3(X,f^{-1}(Z),\pi_2(F))=0$. We also see that $ H^{i+1}(X,f^{-1}(Z),\pi_i(F))=0 $ when $i\ge 3$ in the same way. 

Write $X=\cup_{i=1}^{\infty}X^{i}$ where each $X^i$ is irreducible. Then either $f^{-1}(Z)\cap X^i=X^i$ or $f^{-1}(Z)\cap X^i$ has strictly lower dimension than $X^i$. On the components where $f^{-1}(Z)\cap X^i=X^i$ we use the explicit factorization we constructed above and these components intersect the rest of the components in one- or zero-dimensional sets. We can therefore assume that $\dim f^{-1}(Z)<2$.

Since all obstructions for extension of the section vanish we get a factorization using 5 elementary matrices with continuous entries. Using the spray we can homotope the section to a holomorphic section and we get a factorization of the matrix using 5 elementary matrices with holomorphic entries. We have

\begin{Thm}\label{dim2}
	Let $X$ be a two-dimensional Stein space and $f\colon X\to \mbox{SL}_2(\mathbb{C})$ be a holomorphic mapping. Then there exists holomorphic mappings $g_1,\dots, g_{5}\colon X\to \mathbb{C}$ such that $$f(x)=\left(\begin{matrix}1 & 0 \\g_1(x) & 1\end{matrix}\right)\left(\begin{matrix}1 & g_2(x) \\ 0 & 1\end{matrix}\right)\left(\begin{matrix}1 & 0 \\g_3(x) & 1\end{matrix}\right)\left(\begin{matrix}1 & g_4(x) \\ 0 & 1\end{matrix}\right)\left(\begin{matrix}1 & 0 \\g_5(x) & 1\end{matrix}\right).$$
\end{Thm}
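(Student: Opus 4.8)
The plan is to exhibit the five-factor factorization as a global continuous section of the pull-back bundle $f^*\Phi_5$ that is already holomorphic over the degenerate stratum, and then to make it holomorphic everywhere by means of the stratified spray.

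First I would fix the model $\Phi_5(u,z_1,z_2,z_3,v)=L(u)U(z_1)L(z_2)U(z_3)L(v)$, recall from Lemma \ref{L:submersive} that it is submersive off $\{z_1=z_2=z_3=0\}$, and note that this non-submersive locus maps into $Z=\{b=0\}$, so that $f^{-1}(Z)$ is the natural degenerate stratum. Over $f^{-1}(Z)$ I would write each value $\left(\begin{matrix} a & 0 \\ c & a^{-1}\end{matrix}\right)$ as the four-factor product $L(a^{-1}(c-1))U(a-1)L(1)U(a^{-1}-1)$ and pad it with a trailing $L(0)$ to obtain a genuine five-factor section of $f^*\Phi_5$; the point is that the middle factor $L(1)$ forces the coordinate $z_2=1\neq 0$, so this section lands off the bad set $S_5$. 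Since $f^*\Phi_5$ is submersive, I can thicken it to a continuous section over a neighborhood $U\supset f^{-1}(Z)$.

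Next I would extend the section over all of $X$ by obstruction theory. Over $X\setminus f^{-1}(Z)$ the bundle is trivial, because the scaling $T_\alpha(z_1,z_2,z_3)=(\alpha z_1,\alpha^{-1}z_2,\alpha z_3)$ multiplies $P_2=z_1+z_3+z_1z_2z_3$ by $\alpha$ and hence identifies every fiber with the model fiber $F=\{z_1+z_3+z_1z_2z_3=1\}$. The obstructions then live in $H^{i+1}(X\setminus f^{-1}(Z),U\setminus f^{-1}(Z);\pi_i(F))$ for $i\ge 1$, which by excision are $H^{i+1}(X,f^{-1}(Z);\pi_i(F))$. The group for $i=1$ vanishes because $F$ is simply connected by Lemma \ref{simplyconnected}. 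For $i=2$ I would read off the long exact sequence of the pair that the relative $H^3$ is squeezed between $H^2(f^{-1}(Z);\pi_2(F))$ and $H^3(X;\pi_2(F))$; the latter vanishes as $\dim_{\mathbb C}X=2$, and the former vanishes once $\dim_{\mathbb C}f^{-1}(Z)\le 1$. The same dimension count kills all $i\ge 3$. To legitimize $\dim f^{-1}(Z)<2$ I would decompose $X$ into irreducible components, dispatch any component contained in $f^{-1}(Z)$ by the explicit formula above, and reduce to the case where the stratum is genuinely lower-dimensional.

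Finally, with a continuous five-factor section in hand, I would apply the stratified spray constructed for $\Phi_5$ together with Theorem \ref{t:forstnericprezelj} to deform it, fixed over the bottom stratum, into a holomorphic section, which is precisely a holomorphic factorization of $f$ into five elementary factors. I expect the cohomological step to be the main obstacle: the whole scheme works only because $F$ is simply connected, which is exactly what removes the $\pi_1$-obstruction that would otherwise block a four-factor argument and forces the jump from four factors to five. Verifying $\pi_1(F)=0$ for this affine surface (Lemma \ref{simplyconnected}) is thus both the technical crux and the conceptual reason the count is what it is.
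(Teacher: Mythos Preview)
Your proposal is correct and matches the paper's own proof essentially step for step: the same explicit four-factor formula over $f^{-1}(Z)$ padded to five factors, the same trivialization $T_\alpha$ identifying every fiber over $X\setminus f^{-1}(Z)$ with $F=\{z_1+z_3+z_1z_2z_3=1\}$, the same excision plus long exact sequence computation killing the obstructions (using $\pi_1(F)=0$ from Lemma~\ref{simplyconnected} for $i=1$ and the Stein dimension bounds for $i\ge 2$), the same reduction via irreducible components, and the same final appeal to the stratified spray and Theorem~\ref{t:forstnericprezelj}. The only point you leave implicit is that $f^{-1}(Z)$ is Stein (as a closed analytic subset of $X$), which is what makes $H^2(f^{-1}(Z);\pi_2(F))=0$ follow from $\dim_{\mathbb C}f^{-1}(Z)\le 1$; the paper flags this explicitly.
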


\begin{Rem}
	Note that any holomorphic map from a two-dimensional Stein space into $\mbox{SL}_2(\mathbb{C})$, which is three-dimensional, is null-homotopic. 
\end{Rem}
\begin{Rem}
	Note that we have not proven that any continuous map $f\colon X\to \mbox{SL}_2(\mathbb{C})$ factors using only 5 matrices. We used the fact that $f^{-1}(Z)$ is Stein in our calculations of $H^{i+1}(X,f^{-1}(Z),\pi_i(F))=0$.
\end{Rem} 

\begin{Lem}  \label{simplyconnected} The fiber $F=\{z_1+z_3+z_1z_2z_3=1\}$ is simply connected.
\end{Lem}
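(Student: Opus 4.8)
The plan is to project $F$ onto one coordinate, read off a family of curves over $\mathbb{C}$, and compute $\pi_1$ by van Kampen. I would use $q\colon F\to\mathbb{C}$, $q(z_1,z_2,z_3)=z_2$. For fixed $z_2=t$ the substitution $u=tz_1+1$, $v=tz_3+1$ turns the defining equation into $uv=t+1$ (indeed $uv=t(z_1+z_3+z_1z_2z_3)+1=t+1$), so for $t\neq 0$ the fiber is isomorphic to $\{uv=t+1\}$. Hence the generic fiber ($t\neq 0,-1$) is $\cong\mathbb{C}^{*}$, the fiber over $t=-1$ is the node $\{uv=0\}$, and the fiber over $t=0$ is the line $\{z_1+z_3=1\}\cong\mathbb{C}$. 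The values $0$ and $-1$ are the only places where the family degenerates.

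Next I would cover the base by two contractible disks $D_1\ni 0$ and $D_2\ni -1$ with $0\notin D_2$, $-1\notin D_1$, chosen so that $D_1\cap D_2$ is connected and contains neither special value, and apply van Kampen to $F=q^{-1}(D_1)\cup q^{-1}(D_2)$. The piece near the node is the easy one: taking $D_2=\{|z_2+1|<\rho\}$ with $\rho<1$, the map $(z_1,z_2,z_3)\mapsto(u,v)$ is a biholomorphism of $q^{-1}(D_2)$ onto $\{(u,v):|uv|<\rho\}$, which is star-shaped about the origin along $(u,v)\mapsto(\lambda u,\lambda v)$, $\lambda\in[0,1]$, hence contractible. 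So $\pi_1(q^{-1}(D_2))=1$, and in particular the loop coming from the $\mathbb{C}^{*}$-fiber dies there.

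For the piece over $D_1$ I would instead project to $(z_2,z_3)$: solving $z_1(1+z_2z_3)=1-z_3$ shows $q^{-1}(D_1)$ is biholomorphic to $(D_1\times\mathbb{C})\setminus\Gamma$, where $\Gamma=\{z_2z_3=-1\}$ is a smooth connected curve (it meets $q^{-1}(D_1)$ only where $z_2\neq 0$ and escapes to infinity as $z_2\to 0$, so it is closed in $D_1\times\mathbb{C}$). Since $D_1\times\mathbb{C}$ is simply connected and $\Gamma$ is a smooth connected complex hypersurface, $\pi_1(q^{-1}(D_1))$ is normally generated by a single meridian $\mu$ of $\Gamma$. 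Over the overlap the fibers are $\mathbb{C}^{*}$, so $q^{-1}(D_1\cap D_2)\cong\mathbb{C}^{*}\times(D_1\cap D_2)$ has $\pi_1=\mathbb{Z}$ generated by the fiber loop $\lambda$; under $q^{-1}(D_1\cap D_2)\hookrightarrow q^{-1}(D_1)$ this loop is precisely a small circle around the puncture $z_3=-1/z_2$, i.e.\ the meridian $\mu$.

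Van Kampen then yields $\pi_1(F)\cong\pi_1(q^{-1}(D_1))*_{\mathbb{Z}}1$, the amalgamation identifying $\lambda$ with its image $\mu$ on the $D_1$ side and with $1$ on the $D_2$ side; concretely $\pi_1(F)=\pi_1(q^{-1}(D_1))/\langle\!\langle\mu\rangle\!\rangle$. As $\pi_1(q^{-1}(D_1))$ is normally generated by $\mu$, killing $\mu$ kills the whole group, so $\pi_1(F)=1$. I expect the main obstacle to be bookkeeping the two different degenerations coherently: one must check that the generator of $\pi_1$ of the overlap is simultaneously the meridian of $\Gamma$ in the $D_1$-piece and a nullhomotopic fiber loop in the $D_2$-piece. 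Once the fiber over $-1$ is recognized as a node producing a contractible neighborhood, this identification, together with the standard fact that the complement of a connected smooth divisor in a simply connected space is normally generated by its meridian, finishes the argument.
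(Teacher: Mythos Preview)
Your argument is correct and takes a genuinely different route from the paper's. The paper introduces the coordinate $c = 1 + z_1 z_2$ and observes that $(z_1, z_2, z_3) \mapsto (z_1, c)$ exhibits an open dense part of $F$ as a graph over $(\mathbb{C}^*)^2$ (namely $z_2 = (c-1)/z_1$, $z_3 = (1-z_1)/c$); the complement consists of two disjoint copies of $\mathbb{C}$ sitting over the points $(0,1)$ and $(1,0)$, and the paper argues (rather informally) that gluing each of these lines back in kills one of the two generators of $\pi_1((\mathbb{C}^*)^2)=\mathbb{Z}^2$. Your approach --- projecting to $z_2$, recognising the fiberwise conic $uv=z_2+1$, and running van Kampen across the two degenerations at $z_2=0$ and $z_2=-1$ --- is more systematic; the paper leaves the ``gluing a line kills a generator'' step implicit, whereas you make the mechanism explicit via the meridian of $\Gamma$ and the contractibility of the nodal neighbourhood.

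One small technical inconsistency to fix: you first ask for two \emph{contractible} open sets $D_1,D_2$ covering $\mathbb{C}$ with $0\in D_1\setminus D_2$ and $-1\in D_2\setminus D_1$, but then take $D_2=\{|z_2+1|<\rho\}$. With $D_2$ a bounded disk, any open $D_1$ covering the rest of $\mathbb{C}$ while omitting $-1$ cannot be simply connected, so your claim that $D_1\times\mathbb{C}$ is simply connected would fail. Use half-planes instead, say $D_1=\{\mathrm{Re}\,z_2>-3/4\}$ and $D_2=\{\mathrm{Re}\,z_2<-1/4\}$: your star-shaped argument for $q^{-1}(D_2)\cong\{(u,v):\mathrm{Re}(uv)<3/4\}$ still goes through verbatim (since $\mathrm{Re}(\lambda^2 uv)=\lambda^2\,\mathrm{Re}(uv)$ for real $\lambda$), $D_1\times\mathbb{C}$ is now genuinely simply connected so the meridian argument applies, and $D_1\cap D_2$ is a strip, hence connected. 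With that adjustment the proof is complete.
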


\begin{proof}

Rewrite $z_1+z_3(1+z_1z_2)=1$, put $c=1+z_1z_2$ and we see that part of $F$ is a graph in $ \mathbb{C}^4$ via 
$$(z_1,c)\mapsto (z_1,c,(c-1)/z_1,(1-z_1)/c)$$ 
over $( \mathbb{C}^*)^2$. We have $\pi_1( (\mathbb{C}^*)^2)= \mathbb{Z}^2$ and let us call the generators $(g_1,0)$ and $(0,g_2)$.
 
We need to understand what happens at the points where $z_1=0$ and where $c=0$. If $z_1=0$ then $c=1+z_1z_2=1$, $z_2$ free and $z_3 =1$. So over the point $(0,1)$ we glue a complex line and $(g_1,0)$ becomes contractible. 

Now when $c=0$ then $z_3$ is a free variable, $z_1=1$ and $z_2=-1$ and therefore over the point $(1,0)$ we glue a complex line to get the whole of $F$. Now $(0,g_2)$ becomes contractible and therefore $\pi_1(F)=0$.

\end{proof}

\end{document}